\documentclass[12pt,psamsfonts]{amsart}

\hsize=5 true in
 \textheight=8.2 true in
  \setcounter{page}{1}

\topmargin 30pt

\usepackage{amsmath,amssymb}
\usepackage{graphicx}
\usepackage[dvips]{psfrag}
\usepackage{color}
\usepackage{overpic} 
\usepackage[normalem]{ulem}%
\usepackage[dvipsnames]{xcolor}

\newtheorem{theorem}{Theorem}
\newtheorem{proposition}[theorem]{Proposition}

\newtheorem{example}{Example}

\newtheorem{mtheorem}{Theorem}

\newcommand{\p}{\partial}

\newcommand{\R}{{\mathbb R}}
\newcommand{\N}{{\mathbb N}}
\newcommand{\Ss}{{\mathbb S}}
\newcommand{\ep}{\varepsilon}
\newcommand{\te}{\theta}
\newcommand{\f}{\varphi}
\newcommand{\al}{\alpha}

\parskip 0.2cm

\title[Melnikov analysis in nonsmooth differential systems
]{
Melnikov analysis in nonsmooth differential systems with nonlinear switching manifold}
\author[J. Bastos, C. Buzzi, J. Llibre, D. D. Novaes]{}

\subjclass[2010]{Primary 34C07, 37G15, 34A36}
\keywords{Melnikov theory, averaging theory, nonsmooth differential systems, piecewise linear differential systems, nonlinear switching manifold, limit cycles, Hilbert number.}

\allowdisplaybreaks[2]
\begin{document}
 \maketitle

\centerline{\scshape  J\'efferson L. R. Bastos, \; Claudio A. Buzzi}
\medskip

{\footnotesize \centerline{Universidade Estadual Paulista,
IBILCE-UNESP}\centerline{Av. Cristov\~ao Colombo, 2265, 15.054-000, S. J. Rio Preto, SP, Brasil }
\centerline{\email{jeferson@ibilce.unesp.br} and
\email{buzzi@ibilce.unesp.br}}}

\medskip

\centerline{\scshape Jaume Llibre}
\medskip

{\footnotesize \centerline{ Universitat Aut\`onoma de Barcelona, UAB} \centerline{Edifici C Facultat de Ci\`encies, 08193 Bellaterra, Barcelona, Spain}
\centerline{\email{jllibre@mat.uab.cat}}}

\medskip

\centerline{\scshape Douglas D. Novaes}
\medskip

{\footnotesize \centerline{Universidade Estadual de Campinas, IMECC-UNICAMP}\centerline{R. S\'{e}rgio Buarque de Holanda, 651, 13.083-859, Campinas, SP, Brasil}
\centerline{\email{ddnovaes@ime.unicamp.br}}}

\medskip

\bigskip

\begin{abstract}
We study the family of piecewise linear differential 
systems in the plane with two pieces separated by a cubic curve. Our 
main result is that 7 is a lower bound for the Hilbert number of this 
family. In order to get our main result, we develop the Melnikov functions 
for a class of nonsmooth differential systems, which generalizes, 
up to order 2, some previous results in the literature. Whereas the first order Melnikov 
function for the nonsmooth case remains the same as for
the smooth one (i.e. the first order averaged function) the second order Melnikov function for the nonsmooth case is different from the smooth one (i.e. the second order averaged function). We show that, in this case, a new term depending on the jump of discontinuity and on 
the geometry of the switching manifold is added to the second order averaged function.
\end{abstract}

\section{Introduction and statement of the main results}

In recent years, there has been a growing interest in studying nonsmooth differential systems, motivated mainly by
their engineering applications. In particular, piecewise linear differential systems have been used to model many real processes and different modern devices, for
more details see, for instance, \cite{BerBudChaKow2008} and the
references therein. In the smooth differential systems
many results have been derived under convenient smoothness
assumptions. Thus, the following natural questions arise: What
results from the smooth differential systems theory extend to the
nonsmooth ones? How we must modify the results from the smooth
differential systems theory in order that they work for the nonsmooth
ones? These questions are not merely academic because the
nonsmooth differential systems appear in a natural way
in the context of many applications, see for instance
\cite{ColJefLazOlm2017,MakLam2012,Sim2010,Tei2012}.

Here, we are mainly interested in studying the existence of limit cycles for piecewise linear differential systems with two pieces separated by a  nonlinear switching curve. The {\it Averaging Theory} is a classical method to attack this problem. It has been developed also for nonsmooth differential systems (see \cite{ILN2017,LliMerNov2015,LliNovRod2017,LNT15}). However, in the previous works it is assumed some strong conditions on the switching set, which do not hold when it is a cubic, for instance.  So here, we also develop the bifurcation functions of first and second order for computing periodic solutions of a wider class of nonsmooth differential systems. Usually, these bifurcation functions are called {\it Melnikov Functions}. Additionally,  we compare them with the Melnikov functions of first and second order for smooth systems.

Usually, piecewise linear differential systems have been considered when
a straight line separates the plane in two half--planes. In each of
the half--planes we have a linear differential system. If both
linear systems coincide in the switching line we call it a
continuous piecewise linear differential system. Otherwise we call
it a discontinuous piecewise linear differential system. In recent years, many authors have studied intensively the number of
limit cycles of discontinuous piecewise linear differential systems
with two zones separated by a straight line, see for instance \cite{ArtLliMedTei2013,BuzPesTor2013,BraMel2013,FrePonTor2012,FrePonTor2014,GiaPli2001,HanZha2010,HuaYan2013,HuaYan2013b,Li2014,LliNovTei2015b,LliNovTei2015,LliPon2012,LliTeiTor2013,LliZha2016} and the references quoted in these papers.
Up to now all results of these papers provide examples of at most 3
crossing limit cycles for this class of discontinuous systems. It
remains the open question: Is 3 the maximum number of crossing limit
cycles that such discontinuous differential systems can exhibit?

In this paper, we consider  the family $\mathcal{L}_h$ of piecewise linear differential systems in the
plane with two pieces separated by the curve $\Sigma=h^{-1}(0),$ $h(x,y)=y-x^3.$ More precisely,
we shall study the class  of discontinuous piecewise linear differential
systems obtained by perturbing up to order $N\in\N$ in the small
parameter $\varepsilon$ the linear center $\dot x=y,$ $\dot y=-x,$
i.e.
\begin{equation}\label{eq:1}
\begin{array}{ccl}
\dot x&=&\left\{ \begin{array}{l}y+\sum\limits_{i=1}^{N}\varepsilon^iP_{i}^{+}(x,y), \quad y\geq x^3\\ y+ \sum\limits_{i=1}^{N}\varepsilon^iP_{i}^{-}(x,y), \quad y\leq x^3\end{array}\right.\vspace{0.3cm}\\
\dot y&=&\left\{ \begin{array}{l}-x+\sum\limits_{i=1}^{N}\varepsilon^iQ_{i}^{+}(x,y), \quad y\geq x^3\\ -x+ \sum\limits_{i=1}^{N}\varepsilon^iQ_{i}^{-}(x,y), \quad y\leq x^3\end{array}\right.
\end{array}
\end{equation}
where the functions $P_{i}^{\pm}(x,y)$ and $Q_{i}^{\pm}(x,y)$ are the following
polynomials of degree one 
\[
\begin{array}{l}
P^+_{i}(x,y)=a_{0i}+a_{1i}x+a_{2i}y, \\
P^-_{i}(x,y)=b_{0i}+b_{1i}x+b_{2i}y, \\
Q^+_{i}(x,y)=\alpha_{0i}+\alpha_{1i}x+\alpha_{2i}y, \\
Q^-_{i}(x,y)=\beta_{0i}+\beta_{1i}x+\beta_{2i}y.
\end{array}
\]
We shall assume that $N=1,2.$

 As usual, we denote by $H_{\mathcal{L}_h}$ the maximum number of limit cycles that piecewise linear differential  systems in $\mathcal{L}_h$ can have. This number is called {\it Hilbert number}. Previous results show that $H_{\mathcal{L}_A}\geq 3,$ when $A:\R^2\rightarrow\R$ is a linear function. Our main result provides a lower bound for $H_{\mathcal{L}_h},$ when $h(x,y)=y-x^3.$

\begin{mtheorem}\label{main}
For $|\varepsilon|\neq0$ sufficiently small, there exist piecewise linear differential  systems of kind \eqref{eq:1} admitting at least $7$ limit cycles, which bifurcate from the periodic orbits of the linear center. Consequently, for $h(x,y)=y-x^3$ we have  $H_{\mathcal{L}_h}\geq 7.$
\end{mtheorem}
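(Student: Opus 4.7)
The plan is to apply the nonsmooth first- and second-order Melnikov theory developed earlier in the paper to the Poincar\'e first-return map of \eqref{eq:1} along a transversal half-line issuing from the origin. Since every periodic orbit of the unperturbed center $\dot x=y,\;\dot y=-x$ is a circle $x^2+y^2=r^2$, the section is naturally parameterized by $r>0$, and the corresponding displacement function admits the expansion
\[
\Delta(r,\varepsilon)=\varepsilon\,M_1(r)+\varepsilon^2\,M_2(r)+O(\varepsilon^3).
\]
Simple positive zeros of $M_1$—or of $M_2$ restricted to the set where $M_1\equiv 0$—produce, by the implicit function theorem, limit cycles of \eqref{eq:1} for $|\varepsilon|\neq 0$ sufficiently small. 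The target is to select parameters on which $M_1\equiv 0$ and for which $M_2$ admits at least seven simple positive zeros.

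The first concrete step is to describe the intersection of the unperturbed orbit $\{x^2+y^2=r^2\}$ with the switching curve $\Sigma=\{y=x^3\}$. Substituting $y=x^3$ gives $x^2+x^6=r^2$, whose real roots are $\pm x_0(r)$ with $x_0(r)>0$ the unique positive solution. As these crossing points are antipodal on the circle, every periodic orbit splits into exactly two arcs, and $M_1(r)$ is a sum of two arc integrals of the affine perturbations $(P_1^\pm,Q_1^\pm)$ along the unperturbed flow. Each arc integral evaluates in closed form, with endpoints depending on $x_0(r)$, giving $M_1(r)$ as a linear form in the twelve first-order coefficients over a finite family of explicit generating functions of $r$. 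Identify the linear subspace $\mathcal{V}$ of first-order coefficients on which $M_1\equiv 0$, and restrict the subsequent analysis to $\mathcal{V}$.

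On $\mathcal{V}$, compute $M_2(r)$ using the nonsmooth second-order formula from the paper. The decisive ingredient is the extra boundary term that appears only in the nonsmooth setting: it depends on the jump of the perturbations across $\Sigma$ and on the slope $3x_0(r)^2$ of $\Sigma$ at the crossing points, and it produces new generating functions involving $x_0(r)$ and its powers that lie outside the span of the generators appearing in the smooth second-order averaged function. Decompose $M_2(r)$ as a linear combination, in the twenty-four parameters, of this enlarged generating family and then perform the zero-counting in two substeps: (a) show that the span of the family has dimension at least eight, and (b) exhibit an explicit admissible choice of coefficients for which the resulting $M_2$ has exactly seven simple positive zeros. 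A standard continuity/implicit-function argument then converts these into seven limit cycles of \eqref{eq:1}.

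The main obstacle is substep (b). Once $M_2(r)$ is written explicitly, the generating family is algebraic (not polynomial) in $r$ because of the $x_0(r)$ contributions, so Descartes' rule of signs does not apply directly. A natural route is the change of variable $r\mapsto x_0$ through $r^2=x_0^2+x_0^6$, which turns all the relevant generators into polynomials in $x_0$, and to reduce the counting to a polynomial root problem amenable to an extended Chebyshev or Descartes-type argument. Verifying that the polynomial obtained can genuinely attain seven simple positive roots is the technical heart of the proof.
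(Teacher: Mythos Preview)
Your proposal is correct and follows essentially the same route as the paper: impose $\Delta_1\equiv 0$, compute $\Delta_2=f_2+f_2^*$ via the nonsmooth second-order formula, use the substitution $r^2=x_0^2+x_0^6$ (with $\cos\theta_1=x_0/r$, $\sin\theta_1=x_0^3/r$) to reduce $\Delta_2(r)=0$ to a polynomial equation in $x_0$, and then apply an ECT-system argument to an eight-function family to realize seven simple positive zeros. The paper carries out exactly these computations and verifies in addition that the eight coefficients of the resulting polynomial depend surjectively on the perturbation parameters, which is the linear-algebra check implicit in your substep~(b).
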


Theorem \ref{main} is proved in Section \ref{sec:proof}. Its proof is mainly based on the {\it Chebyshev Theory}, recently extended for {\it Chebyshev systems with positive accuracy} (see \cite{NovTor2017}), and {\it Melnikov Theory}. In order to prove Theorem \ref{main} we develop the Melnikov functions for a class of nonsmooth differential systems (see Theorem \ref{main2} in Section \ref{sec:mel}), which generalizes, up to order $2,$ the results of  \cite{ILN2017,LliNovRod2017}.

It is important to mention that at this moment in the literature
the maximum number of limit cycles exhibited by piecewise linear
differential systems in two zones separated by a straight line is
three. Theorem \ref{main} shows that the nonlinearity of the
switching curve $\Sigma$ is responsible for the
increasing of the number of limit cycles. In fact, this phenomena  has already been  observed in \cite{BraMel2014,NovPon2015}. Indeed, in \cite{NovPon2015} it was proved that the
number of limit cycles can increase arbitrarily with the number of oscillations of the switching curve $\Sigma.$ By oscillations of $\Sigma,$ one can think as the transversal intersections of $\Sigma$ with the straight line $x=0.$ For instance, in \cite{NovPon2015} the authors considered the switching curve $\Sigma=\{ (0,y) \mbox{ if }y\leq0\}\cup\{(h(y),y)\mbox{ if }y>0\},$ where 
\[h(y)=\left\{ \begin{array}{l} k \sin(\pi y), 0\leq y\leq \frac{2n+1}{2}, \\ \\ (-1)^n k, y>\frac{2n+1}{2}.\end{array}\right.\] 
 It was shown that the number of limit cycles of a particular piecewise linear differential  system corresponds to the number of zeros of the function $h,$ which corresponds to the intersections between $\Sigma$ and the straight line $x=0.$

\section{Melnikov functions for a class of nonsmooth systems}\label{sec:mel}

In order to state our results for the nonsmooth
differential systems, we need some notations and definitions. Let $D$
be an open set of $\R^d,$  $\Ss^1=\R/T$ for some period $T>0,$ and $N$ a positive integer. We
consider a finite set of $\mathcal{C}^r$ ($r\geq N+1$)  functions 
$\theta_i:D\rightarrow\Ss^1$ for $i=1,2,\dots,M,$ satisfying
$0<\theta_1(x)<\theta_2(x)<\cdots<\theta_M(x)<T$ for all $x\in D.$
For completeness, we take  $\theta_0(x)\equiv0$ and
$\theta_{M+1}(x)\equiv T.$

We consider the following \textit{nonsmooth differential
system}
\begin{equation}\label{eq:princ}
\dot x=\sum_{i=1}^N \ep^i F_i(t,x)+ \ep^{N+1}R(t,x,\ep),
\end{equation}
with
\[
F_i(t,x)=\left\{\begin{array}{ll}
F_i^0(t,x), & 0<t<\theta_1(x), \\
F_i^1(t,x), & \theta_1(x)<t<\theta_2(x), \\
\dots & \; \\
F_i^M(t,x), & \theta_M(x)<t<T,
\end{array}\right.
\]
and
\[
R(t,x,\ep)=\left\{\begin{array}{ll}
R^0(t,x,\ep), & 0<t<\theta_1(x), \\
R^1(t,x,\ep), & \theta_1(x)<t<\theta_2(x), \\
\dots & \; \\
R^M(t,x,\ep), & \theta_M(x)<t<T,
\end{array}\right.
\]
where $F_i^j:\Ss^1\times D\rightarrow\R^d,$ $R^j:\Ss^1\times
D\times(-\ep_0,\ep_0)\rightarrow\R^d,$ for $i=1,2,\ldots,N$ and
$j=0,1,\dots,M,$ are  $\mathcal{C}^r$ functions, $r\geq N+1,$ and $T$-periodic in the variable $t.$ Notice that, the switching manifold is given by $\Sigma=\{(\theta_i(x),x):\,x\in D,\,i=0,1,\ldots,M,M+1\}.$ Throughout the paper we shall also denote $F^j(t,x,\ep)=\sum_{i=1}^N \ep^i F_i^j(t,x)+ \ep^{N+1}R^j(t,x,\ep).$

The {\it Melnikov Theory} and the {\it Averaging Theory} are classical tools to investigate the existence of periodic solutions of perturbative systems of kind \eqref{eq:princ}. In short, both theories provide a sequence of functions $\Delta_i,$ $i=1,2,\ldots,k,$ which ``control'' the existence of isolated periodic solutions of \eqref{eq:princ}. These functions are called {\it Melnikov Functions} or {\it Bifurcation Functions}. In \cite{LliNovRod2017}, the averaging theory at any order was developed for systems of kind \eqref{eq:princ} assuming that the functions $\theta_i,$ $i=1,2,\ldots,M,$ were constant. It was shown that in this case the bifurcation functions coincide with the {\it averaged functions}, denoted by $f_i$'s, for smooth systems (see \cite{LNT2014}). In particular, for $i=1,2,$ we have
\begin{equation}\label{prom}
\begin{array}{l}
\displaystyle f_1(x)=\int_{0}^{T}F_1(s,x)ds\quad and\vspace{0.3cm}\\
\displaystyle  f_2(x)= \int_{0}^{T}\bigg[D_xF_1(s,x)
\int_{0}^{s}F_1(t,x)dt+F_2(s,x)\bigg]ds.
\end{array}
\end{equation}
Nevertheless, in \cite{LliMerNov2015} it was observed that higher order averaged functions, $f_i, i\geq 2,$ do not always control the bifurcation of isolated periodic solutions for nonsmooth differential systems, and a strong degenerate condition on the switching manifold was stablished in order that $f_2$ stands as the bifurcation function of order 2 (see hypotheses (Hb2) of \cite[Theorem B]{LliMerNov2015}).

As the main result of this section, Theorem \ref{main2} shows that the Melnikov functions of first and second order, $\Delta_1$ and $\Delta_2,$ of system \eqref{eq:princ} write
\begin{equation}\label{mel1}
\Delta_1(x)=f_1(x)\quad \text{and}\quad \Delta_2(x)= f_2(x) + f_2^*(x),
\end{equation}
where $f_1$ and $f_2$ are given by \eqref{prom}, and
\[
f_2^*(x)  = \sum_{j=1}^M \Big(F_1^{j-1}(\theta_j(x),x) -
F_1^{j}(\theta_j(x),x)\Big)
D_x\theta_j(x)\int_0^{\theta_j(x)}\hspace{-0.3cm}F_1(s,x)ds.
\]
This generalizes up to order 2 the results of \cite{ILN2017,LliNovRod2017}  for a wider class of nonsmooth differential equations. In what follows, $J\Delta_i(x)$ denotes the corresponding Jacobian matrix of $\Delta_i$ evaluated at $x.$

\begin{mtheorem} \label{main2}Consider the nonsmooth differential system \eqref{eq:princ} and the functions $\Delta_1$ and $\Delta_2$ defined in \eqref{mel1}. So, the following statements hold.
\begin{itemize} 
\item[{\bf i.}] {\bf (First Order)}
Assume that $a^\ast\in D$ satisfies $\Delta_1(a^\ast)=0$ and
$\det\left(J\Delta_1(a^*)\right)\neq0.$ Then, for $|\ep|\neq0$
sufficiently small, there exists a unique $T$-periodic solution
$x(t,\ep)$ of system \eqref{eq:princ} such that $x(0,\ep)\rightarrow
a^\ast$ as $\ep\rightarrow0.$

\smallskip

\item[{\bf ii.}] {\bf (Second Order)}
Assume that $\Delta_1(x)\equiv0$ and that $a^\ast\in D$ satifies
$\Delta_2(a^\ast)=0$ and $\det\left(J\Delta_2(a^*)\right)\neq0.$
Then, for $|\ep|\neq0$ sufficiently small, there exists a unique
$T$-periodic solution $x(t,\ep)$ of system \eqref{eq:princ} such
that $x(0,\ep)\rightarrow a^\ast$ as
$\ep\rightarrow0.$
\end{itemize}
\end{mtheorem}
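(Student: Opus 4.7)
The plan is to analyze the displacement map $d(z,\ep) := x(T,z,\ep) - z$, where $x(t,z,\ep)$ is the solution of \eqref{eq:princ} with $x(0,z,\ep) = z$, and to show that it admits the expansion $d(z,\ep) = \ep\Delta_1(z) + \ep^2\Delta_2(z) + O(\ep^3)$ in a neighborhood of any point of $D$. Since zeros of $d(\cdot,\ep)$ correspond exactly to $T$-periodic solutions of \eqref{eq:princ}, once the expansion is established the implicit function theorem applied to $d(z,\ep)/\ep$ (respectively to $d(z,\ep)/\ep^2$ when $\Delta_1\equiv 0$) at the nondegenerate zero $a^\ast$ of $\Delta_1$ (respectively $\Delta_2$) yields the unique branch of $T$-periodic solutions with initial condition $z(\ep)\to a^\ast$.

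To derive the expansion, let $\tau_j(z,\ep)$ denote the $j$-th crossing time, defined implicitly by $\tau_j(z,\ep) = \theta_j(x(\tau_j(z,\ep),z,\ep))$ for $1\le j\le M$, with $\tau_0\equiv 0$ and $\tau_{M+1}\equiv T$. Since the unperturbed flow is the identity, $\tau_j(z,0)=\theta_j(z)$ and the implicit function theorem gives
\[
\tau_j(z,\ep) = \theta_j(z) + \ep\, D_x\theta_j(z)\, y_1(\theta_j(z),z) + O(\ep^2),
\]
where $y_1(t,z) := \partial_\ep x(t,z,\ep)|_{\ep=0} = \int_0^t F_1(s,z)\,ds$. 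The function $y_1$ is continuous across every switching time because it is defined as an integral from $0$, even though its derivative has jumps there; this continuity is what makes the formula for $\tau_j^{(1)}(z)=D_x\theta_j(z)\int_0^{\theta_j(z)}F_1(s,z)\,ds$ unambiguous. Higher-order coefficients of $x$ and $\tau_j$ are obtained inductively by matching the Taylor expansions across each piece.

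The key step is then to write
\[
d(z,\ep) = \sum_{j=0}^{M}\int_{\tau_j(z,\ep)}^{\tau_{j+1}(z,\ep)} F^j(t,x(t,z,\ep),\ep)\,dt
\]
and expand in $\ep$. At order $\ep$ the shifts $\tau_j-\theta_j$ contribute nothing because $F^j$ is already $O(\ep)$, so one recovers $\Delta_1(z) = \sum_j\int_{\theta_j(z)}^{\theta_{j+1}(z)}F_1^j(s,z)\,ds = f_1(z)$. At order $\ep^2$, besides the ``interior'' contribution that assembles into $f_2(z)$ exactly as in the smooth case of \cite{LNT2014}, each interior endpoint $\tau_j$ produces a boundary term proportional to $(\tau_j-\theta_j)\bigl(F_1^{j-1}(\theta_j(z),z)-F_1^{j}(\theta_j(z),z)\bigr)$: the jump $F_1^{j-1}-F_1^{j}$ arises because $\tau_j$ is simultaneously the upper limit of the $(j-1)$-th integral and the lower limit of the $j$-th. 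Substituting $\tau_j^{(1)}(z)$ produces exactly the term $f_2^\ast(z)$ defined in \eqref{mel1}.

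The main obstacle I expect is the careful bookkeeping of the second-order expansion: one must cleanly separate the interior contribution (which reproduces $f_2$) from the boundary contributions arising from the $\ep$-dependence of the crossing times, and control the remainder uniformly in $z$ so that it is genuinely $O(\ep^3)$ on compact subsets of $D$. The regularity hypothesis $r\ge N+1$ is precisely what is needed to justify the Taylor expansions of $F^j$, $\tau_j$ and $x$. As a sanity check one verifies that when every $\theta_j$ is constant, $D_x\theta_j\equiv 0$ and $f_2^\ast$ vanishes, recovering the formula of \cite{LliNovRod2017}; and when $F_1$ is globally continuous (so $F_1^{j-1}=F_1^{j}$ on $\Sigma$), the term $f_2^\ast$ also vanishes, recovering the classical smooth formula \eqref{prom}.
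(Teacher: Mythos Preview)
Your proposal is correct and follows essentially the same approach as the paper: both set up the piecewise displacement function $d(z,\ep)=\sum_j\int_{\tau_{j-1}}^{\tau_j}F^{j-1}(s,\f_{j-1}(s,z,\ep),\ep)\,ds$, expand the crossing times $\tau_j$ via the implicit relation $\tau_j=\theta_j(\f_{j-1}(\tau_j,z,\ep))$ to obtain $\tau_j^{(1)}(z)=D_x\theta_j(z)\int_0^{\theta_j(z)}F_1(s,z)\,ds$, and then read off $\Delta_1=f_1$ and $\Delta_2=f_2+f_2^\ast$ before applying the implicit function theorem to $d/\ep$ (respectively $d/\ep^2$). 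The only cosmetic difference is that the paper carries out the expansion of one summand in full detail and states the rest ``are computed in a similar way,'' whereas you describe the boundary/interior decomposition more conceptually; the content is the same.
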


From statement {\bf (i)} of Theorem \ref{main2}, we see that the first order Melnikov function $\Delta_1(x)$ for
nonsmooth differential systems coincides with the
corresponding one $f_1(x)$ for smooth differential systems. This had already been observed in \cite{LliMerNov2015}. The second order Melnikov function for smooth differential systems is equal to $f_2(x).$ Indeed, for smooth differential systems the Melnikov functions are, in some sense, equivalent to the averaged functions at any order (for more details, see \cite{HanRomZha2016}). Nevertheless, from statement {\bf (ii)} of Theorem \ref{main2}, the corresponding second order Melnikov function for nonsmooth differential systems of kind \eqref{eq:princ} has the extra term
$f_2^*(x),$ which depends on the jump of discontinuity and on the geometry of the switching manifold measured, respectively, by the expressions
\[
F_1^{j-1}(\theta_j(x),x) -
F_1^{j}(\theta_j(x),x)\quad \text{and}\quad D_x\theta_j(x)\int_0^{\theta_j(x)}\hspace{-0.3cm}F_1(s,x)ds.
\]

We notice that, in particular, if either the vector field $F_1$ is
continuous, or if the the switching manifold $\Sigma$ is ``vertical'', i.e. $D_x\theta_j(x)$ vanishes identically for $j=1,2,\ldots,M,$ then the extra term $f_2^*$ vanishes. Under one of these two last
assumptions Theorem \ref{main2} provides the known
results given in \cite{BuiLli2004,LNT2014} for smooth systems, and in \cite{ILN2017,LliMerNov2015,LliNovRod2017,LNT15} for nonsmooth differential systems.

In what follows, before the proof of Theorem \ref{main2}, we provide a general geometric interpretation for the increment $f_2^*.$ In short, the next result states that $f_2^*\neq0$ stands as a kind of transversal condition between the switching manifold $\Sigma$ and the average of the perturbation $F_1,$ which is highly related to hypotheses (Hb2) of \cite[Theorem B]{LliMerNov2015}. Accordingly, given $p=(t_p,x_p)\in\Sigma,$ we define the vector
\[
v(p)=\left(0,\dfrac{1}{t_p}\int_0^{t_p} F_1(t,x_p)dt\right).
\]
Notice that the second component of $v(p)$ is the average of the first order perturbation $F_1$ of \eqref{eq:princ} computed on the backward orbit $\{(t,x_p):t\in[0,t_p]\}$ of the unperturbed differential equation \eqref{eq:princ}$\big|_{\ep=0}$ with initial condition $(t(0),x(0))=p.$ 
\begin{proposition}\label{geo}
If $f_2^*\neq0,$ then there exists $p\in\Sigma$ such that $v(p)$ is transversal to $\Sigma$ at $p.$
\end{proposition}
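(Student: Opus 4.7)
The plan is a short contrapositive argument whose substance lies entirely in identifying the scalar $D_x\theta_j(x)\int_0^{\theta_j(x)}F_1(s,x)\,ds$ appearing inside each summand of $f_2^*$ as exactly the algebraic obstruction to transversality. Away from the artificial endpoints $\theta_0\equiv 0$ and $\theta_{M+1}\equiv T$, the switching manifold decomposes into the $d$-dimensional sheets $\Sigma_j=\{(\theta_j(x),x):x\in D\}$, $j=1,\ldots,M$. Each $\Sigma_j$ is the graph of $t=\theta_j(x)$ over $D$, so at a point $p=(\theta_j(x),x)$ its tangent space is the codimension one subspace
\[
T_p\Sigma_j=\{(D_x\theta_j(x)\,w,\,w):w\in\R^d\}
\]
of $\R\times\R^d$. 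A vector $(s_0,w_0)\in\R\times\R^d$ is therefore transversal to $\Sigma_j$ at $p$ if and only if it fails to lie in $T_p\Sigma_j$, that is, $s_0\neq D_x\theta_j(x)\,w_0$.

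Next I would apply this criterion to $v(p)=(0,\frac{1}{t_p}\int_0^{t_p}F_1(t,x_p)\,dt)$, whose first component vanishes. With $t_p=\theta_j(x)>0$, transversality of $v(p)$ to $\Sigma_j$ at $p=(\theta_j(x),x)$ reduces to the single scalar condition
\[
D_x\theta_j(x)\int_0^{\theta_j(x)}F_1(s,x)\,ds\neq 0,
\]
the positive factor $1/\theta_j(x)$ being inessential. This is precisely the scalar multiplying the jump $F_1^{j-1}(\theta_j(x),x)-F_1^{j}(\theta_j(x),x)$ in the $j$th summand of the expression for $f_2^*(x)$.

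The proposition now follows by contraposition. Suppose $v(p)$ is tangent to $\Sigma$ at every $p\in\Sigma$. By the criterion above, this forces $D_x\theta_j(x)\int_0^{\theta_j(x)}F_1(s,x)\,ds=0$ for every $j=1,\ldots,M$ and every $x\in D$. Each summand in the defining formula of $f_2^*(x)$ then vanishes identically, so $f_2^*\equiv 0$, contradicting the hypothesis.

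The only substantive step is the geometric reformulation in the first paragraph: once one recognizes that transversality of $v(p)$ to a codimension one graph in $\R^{d+1}$ is captured by a single scalar equation, and that this scalar is exactly the shared coefficient factored out of each summand of $f_2^*$, the remainder is a one-line contrapositive requiring no further analytical input. I do not expect any genuine obstacle.
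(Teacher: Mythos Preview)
Your argument is correct and is essentially the same as the paper's: both prove the contrapositive by observing that $v(p)\in T_p\Sigma$ is equivalent to $D_x\theta_j(x_p)\int_0^{\theta_j(x_p)}F_1(s,x_p)\,ds=0$, which forces every summand of $f_2^*$ to vanish. You merely spell out the tangent space of the graph $\Sigma_j$ explicitly, whereas the paper asserts the equivalence directly.
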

\begin{proof}
We know that, for $p=(t_p,x_p)\in\Sigma,$ $t_p=\theta_j(x_p)$ for some $j\in\{0,1,\ldots,M,M+1\}.$ Notice that
\[
D_x\theta_j(x_p)\int_0^{\theta_j(x_p)}F_1(s,x)ds=0
\]
is equivalent to $v(p)\in T_p \Sigma.$ Consequently, $v(p)\in T_p \Sigma$ for every $p\in\Sigma$ implies that $f^*_2=0.$
\end{proof}

\begin{proof}[Proof of Theorem \ref{main2}]
We denote the solution of \eqref{eq:princ} by
\begin{equation}\label{piecewisesol}
\f(t,x,\ep)= \left\{\begin{array}{l}
     \f_0(t,x,\ep),\ 0\leq t\leq\al_1(x,\ep);\\
     \f_1(t,x,\ep),\ \al_1(x,\ep)\leq t\leq\al_2(x,\ep);\\
     \cdots \\
     \f_M(t,x,\ep),\ \al_M(x,\ep)\leq t\leq T.
\end{array}\right.
\end{equation}
In the above expression, $\al_j(x,\ep)$   is the flying time that the trajectory $\varphi_{j-1}(\cdot,x,\varepsilon),$  starting at $\varphi_{j-1}(\al_{j-1}(x,\ep),x,\varepsilon)\in D$ for $t=\al_{j-1}(x,\ep),$ reaches the manifold $\{(\theta_j(x),x):x\in D\}\subset\Sigma$ (see Figure \ref{fig0}), that is, 
\[
\al_j(x,\ep) =\te_j(\f_{j-1}(\al_j(x,\ep),x,\ep)).
\]
for $j=1,2,\dots,M,$  $\al_{0}(x,\ep)=0,$ and $\al_{M+1}(x,\ep)=T.$ Moreover,
 \begin{equation}\label{diff}
\begin{array}{l}
\dfrac{\p \f_j}{\p t}(t,x,\ep)=F^j(t,\f_j(t,x,\ep),\ep), \text{ for } j=0,1,\ldots, M, \text{ where}\vspace{0.2cm}\\

\left\{\begin{array}{l}
\f_0(0,x,\ep)=x,\vspace{0.1cm}\\
\f_j(\al_j(x,\ep),x,\ep)=\f_{j-1}(\al_j(x,\ep),x,\ep), \text{ for } j=1,2,\ldots, M.
\end{array}\right.
\end{array}
\end{equation}
From \eqref{diff} and from the differential dependence of the solutions on the initial conditions and on the parameter, we can see inductively that $\al_j(x,\ep)$ and  $\f_j(t,x,\ep)$ are $C^r$ functions, for $j=0,1,\ldots,M.$ 

\begin{figure}[h]
\begin{center}
\begin{overpic}[height=7cm]{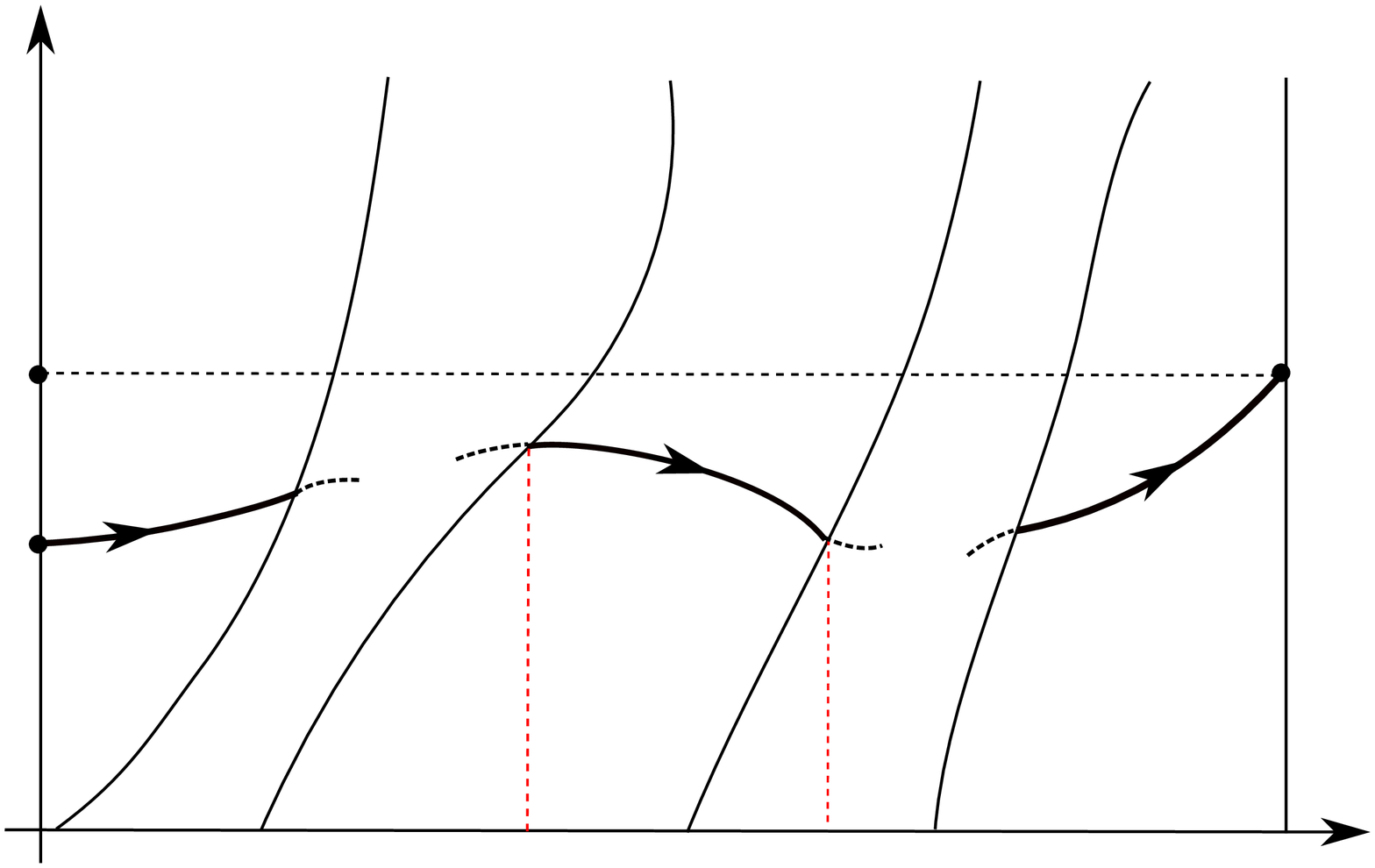} 
\put(-2,23){$x_0$} 
\put(2,64){$x$} 
\put(101,1.5){$t$} 
\put(95,37){$\f(T,x_0,\ep)$}
\put(-0.5,-1.5){$0$} 
\put(29,-1){\color{red}{$\al_{j-1}(x_0,\ep)$}} 
\put(54,-1){\color{red}{$\al_{j}(x_0,\ep)$}}
\put(25,59){$\te_{1}(x)$} 
\put(36,59){$\cdots$} 
\put(43.5,59){$\te_{j-1}(x)$} 
\put(67,59){$\te_{j}(x)$} 
\put(76.5,59){$\cdots$} 
\put(82,59){$\te_{M}(x)$} 
\put(93,-1.5){$T$}
\put(3.5,28){$\f_0(t,x_0,\ep)$}
\put(43,32){$\f_{j-1}(t,x_0,\ep)$}
\put(75,22){$\f_{M}(t,x_0,\ep)$}
\end{overpic}
\vspace{.3cm}
\caption{Illustration of a solution \eqref{piecewisesol} of system \eqref{eq:princ} starting at $x_0$ and crossing the switching manifold}\label{fig0}
\end{center}
\end{figure}

We notice that the recurrence \eqref{diff} describes initial value problems. Therefore, it is equivalent to the following recurrence:
\[\label{int}
\begin{array}{l}
\displaystyle \f_0(t,x,\ep)=x+\int_{0}^{t}F^0(s,\f_0(s,x,\ep),\ep)ds,\vspace{0.2cm}\\
\displaystyle \f_j(t,x,\ep)=\f_{j-1}(\al_{j}(x,\ep),x,\ep)+\int_{\al_{j}(x,\ep)}^{t}F^{j}(s,\f_j(s,x,\ep),\ep)ds,
\end{array}
\]
for $j=1,2,\ldots,M.$ So, we may compute
\[
\begin{aligned}
\f_M(t,x,\ep) =  x  & +\sum_{j=1}^{M}\int_{\al_{j-1}(x,\ep)}^{\al_j(x,\ep)}F^{j-1}(s,\f_{j-1}(s,x,\ep),\ep)ds \\
& +\int_{\al_M(x,\ep)}^{t}F^M(s,\f_M(s,x,\ep),\ep)ds.
\end{aligned}
\]

The displacement function is given by
\begin{equation}\label{eq:melnikov}
\Delta(x,\ep)=\f(T,x,\ep)-x=\f_M(T,x_0,\ep)-x.
\end{equation}
Notice that, from the above comments, $\Delta(x,\ep)$ is a $C^r$ function. Then, expanding \eqref{eq:melnikov} around $\ep=0$ we have
\[
\Delta(x,\ep)=\Delta_0(x)+ \ep\Delta_1(x)+ \ep^2\Delta_2(x)+\mathcal{O}(\ep^3).
\]
We observe that $\Delta_0(x) \equiv 0,$ because $\Delta_0(x) = \Delta(x,0) = \f(T,x,0)-x.$
The functions $\Delta_1(x)$ and $\Delta_2(x)$ are called Melnikov functions of first and second order, respectively.

The displacement function reads
\begin{equation}\label{eq:14}
\Delta(x,\ep)=\f_M(T,x,\ep) - x  =  \sum_{j=1}^{M+1}\int_{\al_{j-1}(x,\ep)}^{\al_j(x,\ep)}\hspace{-.3cm}F^{j-1}(s,\f_{j-1}(s,x,\ep),\ep)ds.
\end{equation}
In what follows, we compute the expansion, around $\ep=0,$ of the $M+1$ summands of \eqref{eq:14}. For the first one we obtain
\begin{equation}\label{eq:15}
\begin{aligned} &\int_{0}^{\al_1(x,\ep)}F^{0}(s,\f_0(s,x,\ep),\ep)ds \\ &=\int_{0}^{\al_1(x,\ep)}[\ep F_1^{0}(s,\f_0(s,x,\ep))+ \ep^2 F_2^{0}(s,\f_0(s,x,\ep))+ \mathcal{O}(\ep^3)]ds\\
&=\ep \Bigg(\int_{0}^{\te_1(x)}F_1^{0}(s,x)ds\Bigg)+ \ep^2\Bigg(\int_{0}^{\te_1(x)}\bigg[DF_1^0(s,x)\frac{\partial \f_0}{\partial  \ep}(s,x,0)\\ &+F_2^0(s,x)\bigg]ds+F_1^0(\te_1(x),x)\frac{\partial \al_1}{\partial \ep}(x,0)\Bigg)+ \mathcal{O}(\ep^3).
\end{aligned}
\end{equation}
In order to finish the computation of the second summand we have to compute the terms $\frac{\partial \f_0}{\partial  \ep}(s,x,0)$ and $\frac{\partial \al_1}{\partial\ep} (x,0).$ Expanding the equation
\[
\f_0(t,x,\ep)=x+\int_{0}^{t}F^{0}(s,\f_0(s,x,\ep),\ep)ds
\]
around $\ep=0$ we obtain
\[
x+\ep \frac{\partial \f_0}{\partial\ep}(t,x,0)+ \mathcal{O}(\ep^2)= x+\ep
\int_{0}^{t}F^{0}_1(s,x)ds+
\mathcal{O}(\ep^2).
\]
So, we get
\begin{equation}\label{eq:dfi1}
\frac{\partial \f_0}{\partial
    \ep}(t,x,0)=\int_{0}^{t}F_1^{0}(s,x)ds.
\end{equation}
In the same way, expanding the equation
$\al_1(x,\ep)=\te_1(\f_0(\al_1(x,\ep),x,\ep))$
around $\ep=0$ we obtain
\[
\begin{aligned}
& \al_1(x,0)+\ep \frac{\partial \al_1}{\partial \ep}(x,0)+
\mathcal{O}(\ep^2) = \\
& \te_1(x)+ \ep D_x\te_1(x)\bigg(\frac{\partial \f_0}{\partial
    t}(\te_1(x),x,0)\frac{\partial \al_1}{\partial \ep}(x,0)+\frac{\partial \f_0}{\partial
    \ep}(\te_1(x),x,0)\bigg)+\mathcal{O}(\ep^2).
\end{aligned}
\]
Since $\f_0(t,x,0)=x$ for all $t$ we get
\begin{equation}\label{eq:dalpha1}
\frac{\partial \al_1}{\partial
    \ep}(x,0)=D_x\te_1(x)\int_{0}^{\te_1(x)}F_1^{0}(s,x)ds.
\end{equation}
Substituting \eqref{eq:dfi1} and \eqref{eq:dalpha1} in \eqref{eq:15} we have the expression for the first summand of \eqref{eq:14}
\begin{equation}\label{parc1}
\begin{aligned}
&\int_{0}^{\al_1(x,\ep)}F^{0}(s,\f_0(s,x,\ep),\ep)ds  =
\ep \Bigg(\int_{0}^{\te_1(x)}F_1^{0}(s,x)ds\Bigg) \\
&+\ep^2\Bigg(\int_{0}^{\te_1(x)}\bigg[D_xF_1^0(s,x) \int_{0}^{s}F_1^{0}(t,x)dt+F_2^0(s,x)\bigg]ds\\ &+F_1^0(\te_1(x),x)D_x\te_1(x)\int_{0}^{\te_1(x)}F_1^{0}(s,x)ds\Bigg)+ \mathcal{O}(\ep^3).
\end{aligned}
\end{equation}
The other summands of \eqref{eq:14} can be computed in a similar way and they are given by
\begin{equation}\label{parc2}
\begin{aligned}
&\int_{\al_{j-1}(x,\ep)}^{\al_{j}(x,\ep)}F^{j-1}(s,\f_{j-1}(s,x,\ep),\ep)ds  =
\ep \Bigg(\int_{\te_{j-1}(x)}^{\te_j(x)}F_1^{j-1}(s,x)ds\Bigg) \\
&+\ep^2\Bigg(\int_{\te_{j-1}(x)}^{\te_j(x)}\bigg[D_xF_1^{j-1}(s,x) \int_{0}^{s}F_1(t,x)dt+F_2^{j-1}(s,x)\bigg]ds\\ &+F_1^{j-1}(\te_j(x),x)D_x\te_j(x)\int_{0}^{\te_j(x)}F_1(s,x)ds \\ &-F_1^{j-1}(\te_{j-1}(x),x)D_x\te_{j-1}(x)\int_{0}^{\te_{j-1}(x)}F_1(s,x)ds\Bigg)+ \mathcal{O}(\ep^3).
\end{aligned}
\end{equation}
From \eqref{parc1} and \eqref{parc2}, we have that the Melnikov functions of first and second order are given by
\[
\Delta_1(x)= \int_{0}^{T}F_1(s,x)ds
\]
and
\[
\Delta_2(x)= f_2(x) + f_2^*(x),
\]
where
\[
f_2(x)= \int_{0}^{T}\bigg[D_xF_1(s,x) \int_{0}^{s}F_1(t,x)dt+F_2(s,x)\bigg]ds
\]
and
\[
f_2^*(x)  = \sum_{j=1}^M \bigg(F_1^{j-1}(\theta_j(x),x)-F_1^j(\theta_j(x),x)\bigg)D_x\theta_j(x)\int_0^{\theta_j(x)}\hspace{-0.3cm}F_1(s,x)ds.
\]

From the definition of the displacement function in \eqref{eq:melnikov}, it is clear that the $T$-periodic solutions $x(t,\ep)$ of system \eqref{eq:princ}, satisfying $x(0,\ep)=x,$ are in one-to-one correspondence to the zeros of the equation $\Delta(x,\ep)=0.$

Now, define $\widehat \Delta(x,\ep)=\Delta(x,\ep)/\ep=\Delta_1(x)+\mathcal{O}(\ep).$ From hypotheses, we have that
\[
\widehat\Delta(a^*,0)=\Delta_1(a^*)=0, \text{ and } \det\left(\dfrac{\p\widehat\Delta}{\p x}(a^*,0)\right)=\det\left(J \Delta_1(a*)\right)\neq0.
\]
Therefore, from the {\it Implicit Function Theorem}, we get the
existence of a unique $C^r$ function $a(\ep)\in D$ such that
$a(0)=a^*$ and $\Delta(a(\ep),\ep)=\widehat \Delta(a(\ep),\ep)=0,$
for every $|\ep|\neq0$ sufficiently small. This completes the proof of statement {\bf (i)}.

Finally, assuming that $\Delta_1(x)\equiv0,$ define $\widetilde \Delta(x,\ep)=\Delta(x,\ep)/\ep^2=\Delta_2(x)+\mathcal{O}(\ep).$
So, the proof of statement {\bf (ii)} follows analogously to the proof of statement {\bf (i)}. 
\end{proof}

\section{Bifurcation of limit cycles}\label{sec:proof}
 In this section we shall apply the Melnikov functions developed in the previous section to study the bifurcation of limit cycles of system \eqref{eq:1} for $|\varepsilon|\neq0$ sufficiently small. Then, Theorem \ref{main} will be a direct consequence of Proposition \ref{prop2}.

A useful tool to study the number of isolated zeros of a function is the {\it Chebyshev Theory}. We recall that the Wronskian of the ordered $k+1$ functions $u_0, \ldots, u_k$ is
\begin{equation}\label{Wron}
W_k(x)=W_k(u_0,\ldots,u_k)(x)=\det\big(M(u_0,\ldots,u_k)(x)\big),
\end{equation}
where
\[
M(u_0,\ldots,u_k)(x)=\left(\begin{array}{ccc}
u_0(x)&\cdots&u_k(x)\\
u_0'(x)&\cdots&u_k'(x)\\
\vdots&\ddots&\vdots\\
u_0^{(k)}(x)&\cdots& u_k^{(k)}(x)
\end{array}\right).
\]
We say that $\mathcal{F}=[u_0,u_1,\ldots,u_n]$ is an {\it Extended Complete Chebyshev} system or an ECT-system on a closed interval $[a,b]$ if and only if $W(u_0,u_1,\ldots,u_k)(x)\neq 0$ on $[a,b]$ for $0\leq k\leq n,$ see \cite{KarStu1966}. In what follows, $\textrm{Span}(\mathcal{F})$ denotes the set of all  functions given by linear combinations of the functions of  $\mathcal{F}.$ A classical result concerning ECT-system is the following:

\begin{theorem}[\cite{KarStu1966}]\label{ECT}
Let $\mathcal{F}=[u_0,u_1,\ldots,u_n]$ be an ECT-system on  a closed interval $[a,b].$ Then, the number of isolated zeros for every element of $\textrm{Span}(\mathcal{F})$ does not exceed $n.$ Moreover, for each configuration of $m\le n$ zeros, taking into account their multiplicity,
there exists $F\in\textrm{Span}(\mathcal{F})$ with this configuration of zeros.
\end{theorem}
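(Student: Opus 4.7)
The plan is a two-step argument, both by induction on $n$. For the upper bound on the number of isolated zeros, the key tool is a reduction that trades an ECT-system of size $n+1$ for one of size $n$ via Rolle's theorem; for the realization statement I would use an inductive construction that exploits the non-vanishing of every Wronskian $W_k$.

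For the upper bound, the base case $n=0$ is immediate: the hypothesis $W_0(u_0) = u_0 \neq 0$ on $[a,b]$ means that any nonzero multiple of $u_0$ has no zeros at all. For the inductive step, given $F = \sum_{i=0}^n a_i u_i \in \textrm{Span}(\mathcal{F})$, I would form the auxiliary function $G(x) = F(x)/u_0(x)$, which is well-defined and as smooth as $F$ because $u_0$ does not vanish on $[a,b]$. Every isolated zero of $F$ is an isolated zero of $G$, so by Rolle's theorem $G'$ has at least one zero strictly between each pair of consecutive zeros of $G$. A direct calculation gives $G'(x) = \sum_{i=1}^n a_i v_i(x)$ where $v_i = (u_i/u_0)'$, and the classical Wronskian identity
\[
W_k(v_1,\ldots,v_k)(x) \; = \; \frac{W_{k+1}(u_0,u_1,\ldots,u_k)(x)}{u_0(x)^{k+1}}
\]
shows that $[v_1,\ldots,v_n]$ is itself an ECT-system on $[a,b]$. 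By the inductive hypothesis $G'$ has at most $n-1$ zeros, so $F$ has at most $n$.

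For the realization statement, I would argue by induction on $m$, constructing an explicit element of $\textrm{Span}(\mathcal{F})$ with prescribed zero configuration $(\xi_1,n_1),\ldots,(\xi_\ell,n_\ell)$ satisfying $\sum_j n_j = m \le n$. The base case $m=0$ is handled by $F = u_0$, which has no zeros. For the inductive step I would first invoke the inductive hypothesis to obtain $F_0$ realizing $(\xi_1,n_1),\ldots,(\xi_{\ell-1},n_{\ell-1})$ and no other zeros, then consider the one-parameter family $F_t = F_0 + t\, G$ for a suitable $G \in \textrm{Span}(\mathcal{F})$ chosen so that the linear conditions expressing ``$F_t$ has a zero of order $n_\ell$ at $\xi_\ell$'' can be solved; non-vanishing of the appropriate ECT minors of $M(u_0,\ldots,u_n)$ guarantees the required linear independence. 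A continuity argument together with the already-established upper bound of $n$ zeros prevents spurious zeros from appearing.

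The hard part, I expect, is the realization claim: the upper bound is a clean Rolle iteration once the Wronskian reduction is in hand, but simultaneously pinning down both the prescribed multiplicities at the $\xi_j$ and the absence of any extra zero requires a genuinely nontrivial perturbation argument that uses the full strength of the ECT hypothesis (non-vanishing of every $W_k$, not merely $W_n$). In particular, handling higher multiplicities $n_j \ge 2$ forces one to solve Hermite-type interpolation problems inside $\textrm{Span}(\mathcal{F})$, and it is precisely the ECT determinant conditions that make these solvable.
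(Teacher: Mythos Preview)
The paper does not contain a proof of this statement: Theorem~\ref{ECT} is quoted as a classical result from \cite{KarStu1966} and is used as a black box in the proofs of Propositions~\ref{prop1} and~\ref{prop2}. There is therefore no ``paper's own proof'' to compare against.

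That said, your proposal is the standard argument one finds in the Chebyshev-systems literature (Karlin--Studden, P\'olya--Szeg\H{o}, etc.). The upper bound via the reduction $v_i=(u_i/u_0)'$, the Wronskian identity
\[
W_k(v_1,\ldots,v_k)(x)=\frac{W_{k+1}(u_0,u_1,\ldots,u_k)(x)}{u_0(x)^{k+1}},
\]
and Rolle's theorem is exactly the classical inductive proof, and your sketch is correct. For the realization part your outline is in the right spirit, though the usual textbook route is slightly cleaner than the perturbation argument you describe: one shows directly that the generalized Vandermonde (Hermite) determinant built from $u_0,\ldots,u_n$ at the nodes $\xi_1,\ldots,\xi_\ell$ with the prescribed multiplicities is nonzero---this is equivalent to the ECT property---so the Hermite interpolation problem has a unique solution in $\textrm{Span}(\mathcal{F})$, and the upper bound already proved then forbids any extra zeros. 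Your continuity/perturbation approach can be made to work but is more delicate to execute rigorously, especially when several multiplicities exceed one.
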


In \cite{NovTor2017}, the above result was extended for Chebyshev systems with positive accuracy. In what follows, we say that a zero of a function is {\it simple} if its Jacobian matrix evaluated at the zero is a nonsingular matrix.

\begin{theorem}[\cite{NovTor2017}]\label{ECT2}
Let $\mathcal{F}=[u_0,u_1,\ldots,u_n]$ be an ordered set of functions on $[a,b].$ Assume that all the
Wronskians $W_k(x),$ $k=0,\dots,n-1,$ are nonvanishing except $W_n(x),$ which has exactly one zero on $(a, b)$ and this zero is simple. Then, the number of isolated zeros for every element of $\textrm{Span}(\mathcal{F})$ does not exceed $n+1.$ Moreover, for any configuration of $m\le n+1$ zeros there exists $F\in\textrm{Span}(\mathcal{F})$
realizing it.
\end{theorem}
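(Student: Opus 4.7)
The plan is to prove the theorem by induction on the length $n$ of $\mathcal{F}$, reusing the reduction argument that underlies the proof of Theorem~\ref{ECT}. The base case $n=0$ is immediate, since $\textrm{Span}(\mathcal{F})$ consists of scalar multiples of $u_0=W_0$, which by hypothesis has exactly one simple zero on $(a,b)$. For the inductive step, I exploit that $u_0=W_0$ is nonvanishing to define $\tilde u_i=(u_i/u_0)'$ for $i=1,\ldots,n$ and form the ordered set $\tilde{\mathcal{F}}=[\tilde u_1,\ldots,\tilde u_n]$. The classical Wronskian reduction identity
\[
W(u_0,u_1,\ldots,u_k)=u_0^{\,k+1}\,W(\tilde u_1,\ldots,\tilde u_k),\qquad k=1,\ldots,n,
\]
then yields $\tilde W_j=W_{j+1}/u_0^{\,j+2}$ for $j=0,\ldots,n-1$. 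Consequently $\tilde W_0,\ldots,\tilde W_{n-2}$ are nonvanishing while $\tilde W_{n-1}$ has exactly one simple zero on $(a,b)$, so the inductive hypothesis applies to $\tilde{\mathcal{F}}$ (with $n-1$ in place of $n$) and provides both the bound $n$ on the number of isolated zeros of any element of $\textrm{Span}(\tilde{\mathcal{F}})$ and the realizability of any configuration of at most $n$ zeros.

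The upper bound for $\mathcal{F}$ then follows by contradiction. If some $F=\sum_{i=0}^n c_i u_i\in\textrm{Span}(\mathcal{F})$ had at least $n+2$ isolated zeros on $(a,b)$, counted with multiplicity, then $G:=F/u_0=c_0+\sum_{i=1}^n c_i(u_i/u_0)$ would share that zero set (since $u_0$ does not vanish), and Rolle's theorem would force $G'=\sum_{i=1}^n c_i\,\tilde u_i\in\textrm{Span}(\tilde{\mathcal{F}})$ to possess at least $n+1$ isolated zeros, contradicting the inductive hypothesis.

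The realization step is more delicate. Given a prescribed configuration $x_1<\cdots<x_m$ in $(a,b)$ with $m\le n+1$, I would seek $F$ in the form $F(x)=u_0(x)\bigl(c_0+\int_a^{x}\tilde G(t)\,dt\bigr)$, which lies automatically in $\textrm{Span}(\mathcal{F})$ whenever $\tilde G\in\textrm{Span}(\tilde{\mathcal{F}})$. The task then splits into choosing $\tilde G$ to satisfy the $m-1$ integral conditions $\int_{x_i}^{x_{i+1}}\tilde G(t)\,dt=0$, so that the antiderivative takes the same value at every prescribed point, and then tuning $c_0$ to make that common value zero. I expect the main obstacle to arise in the critical case $m=n+1$, where one obtains an $n\times n$ linear system whose nonsingularity must be traced back, via the relation $\tilde W_{n-1}=W_n/u_0^{\,n+1}$, to the assumption that $W_n$ has a simple zero on $(a,b)$; this is precisely the ``extra accuracy'' that lifts the classical ECT bound of $n$ to $n+1$, and completing the argument rigorously in this marginal case is the crux of the proof.
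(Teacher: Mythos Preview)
The paper does not contain a proof of Theorem~\ref{ECT2}: the result is quoted from \cite{NovTor2017} and invoked as a black box in the proof of Proposition~\ref{prop1}, so there is no argument in the present paper against which to compare your proposal.

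On your sketch itself: the inductive reduction via $\tilde u_i=(u_i/u_0)'$ together with the Wronskian identity $W(u_0,\ldots,u_k)=u_0^{\,k+1}W(\tilde u_1,\ldots,\tilde u_k)$ is the classical mechanism, and the Rolle-type upper bound you derive from it is sound. The realizability half is, as you already concede, incomplete; note in addition that your base case $n=0$ does not support the realization claim, since $\textrm{Span}([u_0])$ consists of scalar multiples of a function with a \emph{fixed} simple zero, so an arbitrary prescribed zero in $(a,b)$ cannot be realized. Any inductive proof of the realization statement must therefore be anchored differently (for instance, starting from the genuine ECT situation at level $n-1$ and obtaining the $(n{+}1)$st zero by a perturbation argument that exploits the sign change of $W_n$), rather than by the straight descent to $n=0$ that you outline.
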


As a first application of Theorem \ref{main2} we consider piecewise
linear differential systems \eqref{eq:1} with $N=1.$ The next result shows that in this case the first order Melnikov function $\Delta_1$ has at most 3 isolated zeros. From statement {\bf(i)} of Theorem \ref{main2}, this means that the differential system \eqref{eq:1} has at most 3 limit cycles bifurcating from the periodic orbits of the linear center. Moreover, we shall see that this maximum number is reached by particular examples.

 \begin{proposition}\label{prop1}
Consider the piecewise linear differential system \eqref{eq:1} with $N=1,$ and let $\Delta_1$ be its first order Melnikov function \eqref{mel1}. Then,  the number of isolated zeros of $\Delta_1$ does not exceed $3.$ Moreover, there exist linear differential systems \eqref{eq:1} such that $\Delta_1$ has exactly 3 simple zeros.
\end{proposition}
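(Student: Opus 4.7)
My plan is to apply statement {\bf i} of Theorem \ref{main2} to reduce to a first order Melnikov function on a single radial variable, and then bound its isolated zeros using Theorem \ref{ECT2} (Chebyshev systems with positive accuracy).

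First I parametrize the periodic orbits of the unperturbed linear center by $(x(t),y(t))=(r\cos t,-r\sin t)$, $r>0$, so that the period is $T=2\pi$. The switching curve $y=x^3$ intersects each circle in two antipodal points, by the odd symmetry $(x,y)\mapsto(-x,-y)$; hence the switching times satisfy $\theta_2(r)=\theta_1(r)+\pi$, where $\theta_1(r)\in(\pi/2,\pi)$ is determined implicitly by $\sin\theta_1+r^2\cos^3\theta_1=0$. This antipodal relation is the critical simplification: over any arc of length $\pi$ one has $\int\sin^2 t=\int\cos^2 t=\pi/2$, $\int\sin t\cos t=0$, and $\int\cos t\,dt=-2\sin\theta_1(r)$, $\int\sin t\,dt=2\cos\theta_1(r)$. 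Substituting the linear $P_1^\pm$, $Q_1^\pm$ into the radial projection $\Delta_1(r)=\int_0^{2\pi}[P_1\cos t-Q_1\sin t]\,dt$ and splitting the integral at $\theta_1$ and $\theta_1+\pi$, all $\sin t\cos t$ contributions cancel and one obtains
\[
\Delta_1(r)=A\,r+B\,\sin\theta_1(r)+C\,\cos\theta_1(r),
\]
where $A$, $B$, $C$ are three independent linear forms in the twelve perturbation coefficients; in particular $a_{21}$, $b_{21}$, $\alpha_{11}$, $\beta_{11}$ drop out.

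Since $\theta_1:(0,\infty)\to(\pi/2,\pi)$ is a diffeomorphism with inverse $r(\theta)=\sqrt{-\tan\theta\sec^2\theta}$, counting isolated zeros of $\Delta_1(r)$ is equivalent to counting zeros on $(\pi/2,\pi)$ of a generic element of $\textrm{Span}[\sin\theta,\cos\theta,r(\theta)]$. In this ordered basis one has $W_0=\sin\theta>0$ and $W_1=-1$, both nowhere vanishing, while a row operation reduces the Wronskian of all three to $W_2=-(r(\theta)+r''(\theta))$. Using the explicit formula for $r(\theta)$, a direct calculation gives
\[
r(\theta)+r''(\theta)=\frac{r(\theta)\,\bigl(15\tan^4\theta+14\tan^2\theta-1\bigr)}{4\tan^2\theta},
\]
and the quadratic $15s^2+14s-1$ (with $s=\tan^2\theta$) has $s=1/15$ as its only positive root, which is simple. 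Hence $W_2$ has exactly one simple zero on $(\pi/2,\pi)$, and Theorem \ref{ECT2} with $n=2$ yields the bound of $n+1=3$ isolated zeros.

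The constructive half of Theorem \ref{ECT2} then guarantees that any configuration of three points in $(\pi/2,\pi)$ is realized as the zero set of some element of the span; since $A$, $B$, $C$ can be adjusted independently through the twelve perturbation parameters, this produces an explicit piecewise linear system in $\mathcal{L}_h$ with $N=1$ whose first order Melnikov function has three simple zeros. The main technical obstacle is the Wronskian computation of Step 2, and in particular the closed form for $r(\theta)+r''(\theta)$; once this is obtained, the conclusion follows from the elementary observation on the quadratic $15s^2+14s-1$.
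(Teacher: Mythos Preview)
Your proof is correct and follows the same overall strategy as the paper: derive $\Delta_1(r)=Ar+B\sin\theta_1(r)+C\cos\theta_1(r)$ with $A,B,C$ independent linear forms in the perturbation coefficients, then apply Theorem~\ref{ECT2} to a three-function system whose last Wronskian has exactly one simple zero on the relevant interval. The only substantive difference is the coordinate used for the Chebyshev analysis. You stay in the angle variable $\theta$ and compute $W_2=-(r(\theta)+r''(\theta))$ directly from $r(\theta)=\sqrt{-\tan\theta\sec^2\theta}$, whereas the paper substitutes the $x$-coordinate of the switching point (via $\cos\theta_1=x/r$, $\sin\theta_1=x^3/r$, $r^2=x^2+x^6$) to reduce $r\Delta_1(r)=0$ to the polynomial equation $\gamma_0+\gamma_1(x+x^5)+\gamma_2 x^2=0$, for which the Wronskians are immediate polynomials ($W_0=1$, $W_1=1+5x^4$, $W_2=2-30x^4$). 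The two computations are equivalent---since $\tan^2\theta_1=x^4$, your zero $\tan^2\theta=1/15$ is exactly the paper's $x^4=1/15$---but the paper's algebraic substitution trades your differential-calculus computation of $r+r''$ for trivial polynomial Wronskians, at the cost of having to spot the substitution.
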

\begin{proof}
In order to apply the Melnikov method, we must write the differential system \eqref{eq:1} in polar coordinates. Performing the change of variables $(x,y)=(r\cos\theta,r\sin\theta)$ and taking $\theta$ as the new independent variable, the differential system \eqref{eq:1} is transformed into the
differential equation
\[\label{eq:pol}
\frac{dr}{d\theta} = \varepsilon F_1^\pm(\theta,r) + \varepsilon^2 F_2^\pm(\theta,r)+\mathcal{O}(\varepsilon^3),
\]
where
\[\label{eq:f1p}
\begin{array}{ll}
F_1^+(\theta,r)=& - (a_{01}\cos\theta +b_{01}\sin\theta) - \frac{r}{2} \big(a_{11} +b_{21} \big) - \\ & \frac{r}{2} \big((a_{11}-b_{21}) \cos(2 \theta) + (a_{21} +b_{11})\sin(2 \theta)\big)
\end{array}
\]
and
\[\label{eq:f1m}
\begin{array}{ll}
F_1^-(\theta,r)=& - (\alpha_{01}\cos\theta +\beta_{01}\sin\theta) - \frac{r}{2} \big(\alpha_{11} +\beta_{21}\big)- \\ &
\frac{r}{2} \big((\alpha_{11}-\beta_{21}) \cos(2 \theta) + (\alpha_{21} +\beta_{11})\sin(2 \theta)\big).
\end{array}
\]

The switching curve  $\Sigma$ is given in polar coordinates by $\sin\theta=r^2\cos^3\theta.$ It means that for each $r>0$ there exist two angles $\theta_1(r)\in[0,\frac{\pi}{2}]$ and
$\theta_2(r)=\theta_1(r)+\pi\in[\pi,\frac{3\pi}{2}]$ such that the circle $x^2+y^2=r^2$ intersects $\Sigma$ at the
points $(r\cos\theta_1(r),r\sin\theta_1(r))$ and $(r\cos\theta_2(r),r\sin\theta_2(r))$ (see Figure \ref{fig1}).

\begin{figure}[h]
    \begin{center}
        \begin{overpic}[height=6cm]{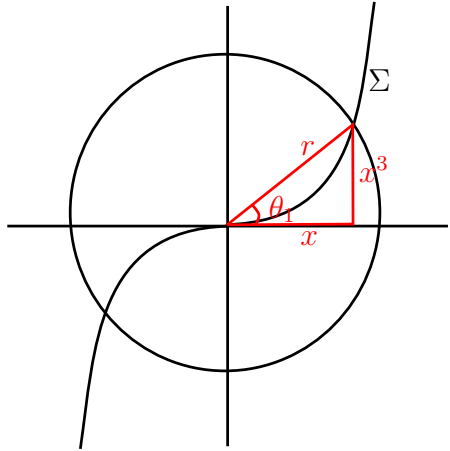}
            \put(58,52){\color{red}{$\theta_1$}}
            \put(65,46){\color{red}{$x$}}
            \put(78,60){\color{red}{$x^3$}}
            \put(65,66){\color{red}{$r$}}
            \put(80,80){$\Sigma$}
        \end{overpic}
        \caption{Identities involving $x,$ $r$ and $\theta_1.$}\label{fig1}
    \end{center}
\end{figure}

According to the theory developed in Section \ref{sec:mel}, the Melnikov function of first order is given by
\[\begin{array}{ll}
 \Delta_1(r) & = \displaystyle \frac{1}{2\pi}\left(\int_0^{\theta_1(r)} \hspace{-.5cm} F_1^-(\theta,r)d\theta + \int_{\theta_1(r)}^{\theta_1(r)+\pi}
  \hspace{-.5cm} F_1^+(\theta,r)d\theta + \int_{\theta_1(r)+\pi}^{2\pi} \hspace{-.5cm} F_1^-(\theta,r)d\theta\right) \\ \\
& = \gamma_0 \cos\theta_1(r)+\gamma_1 r + \gamma_2\sin\theta_1(r),
\end{array}\]
where
\begin{equation}\label{eq:gamas}
\begin{array}{l}
\gamma_0=-2 (b_{01}-\beta_{01}), \\
\gamma_1=\frac{-\pi}{2} (a_{11}+b_{21}+\alpha_{11}+\beta_{21}), \\
\gamma_2= a_{01}-\alpha_{01}.
\end{array}
\end{equation}

From Theorem \ref{main2}, each isolated zero of $\Delta_1(r)=0$ corresponds to a limit cycle of \eqref{eq:1}.
We prove now that the maximum number of positive zeros $\Delta_1(r)=0$ is three. As we can see in Figure \ref{fig1}, the variables $x,$ $r$ and $\theta_1$ are related by the identities
\begin{equation}\label{eq:id}
r^2=x^2+x^6, \quad \cos\theta_1=\frac{x}{r}, \quad\mbox{ and }\quad \sin\theta_1=\frac{x^3}{r}.
\end{equation}
For $r>0$ we have that $\Delta_1(r)=0$ if and only if $r\Delta_1(r)=0.$ So, the maximum number of limit cycles of \eqref{eq:1} obtained by the first order Melnikov method is the maximum number of positive zeros of the polynomial
\begin{equation}\label{eq:xx}
p_1(x)=\gamma_0 + \gamma_1(x+x^5) + \gamma_2 x^2.
\end{equation}
Consider the ordered set of functions $\mathcal{F}=[u_0,u_1,u_2],$ where $u_0(x)=1,$ $u_1(x)=x+x^5$ and $u_2(x)=x^2.$ We have that the Wronskians, defined in \eqref{Wron}, are $W_0(x)=1,$ $W_1(x)=1+5x^4$ and $W_2(x)=2-30x^4.$ The Wronskians $W_0(x)$ and $W_1(x)$ do not vanish in $\R$ and $W_2(x)$ has exactly one positive zero. So, according to Theorem \ref{ECT2}, three is the maximum number of positive roots of polynomial \eqref{eq:xx} and there is a choice for $\gamma_0,$ $\gamma_1$ and $\gamma_2$ such that $p_1(x)$ has exactly three zeros. From \eqref{eq:gamas}, it is clear that for any choice of $\gamma$'s it is possible to obtain the corresponding perturbation. This concludes the proof of Proposition \ref{prop1}.
\end{proof}

\begin{example}
In what follows, we exhibit a concrete perturbation presenting three limit cycles for $|\ep|\neq0$ sufficiently small. For $\gamma_0=-0.05,$ $\gamma_1=1$  and $\gamma_2=-2,$ the polynomial \eqref{eq:xx} has three positive roots (see Figure \ref{fig2}).
\begin{figure}[h]
\begin{center}
\begin{overpic}[height=5cm]{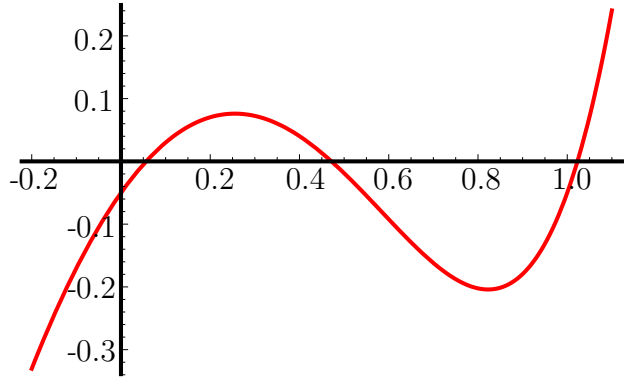} 
    \put(9,54){$0.2$} \put(9,44){$0.1$} \put(7.5,23){-$0.1$} \put(7.5,13){-$0.2$} \put(7.5,2.5){-$0.3$} \put(-2,31){-$0.2$} \put(29,31){$0.2$} \put(44,31){$0.4$} \put(58.5,31){$0.6$} \put(73,31){$0.8$} \put(88,31){$1.0$}
\end{overpic}
\caption{Graph of $p_1(x)= x^5 -2 x^2 + x -0.05.$}\label{fig2}
\end{center}
\end{figure}
It is clear that the linear system
\[\begin{array}{l}
-2 (b_{01}-\beta_{01})=-0.05,\\
\frac{-\pi}{2}(a_{11}+b_{21}+\alpha_{11}+\beta_{21})=1,\\
a_{01}-\alpha_{01}=-2,
\end{array}\]
has infinity many solutions. For example, we can take $a_{01}=-2,$ $a_{11}=-2/\pi,$ $b_{01}=1/40$ and all other coefficients equal to zero. So, for $|\ep|\neq0$ sufficiently small, system
\[\begin{array}{l}
\dot x = \left\{\begin{array}{ll}
y - 2\varepsilon (1+\frac{1}{\pi}x), & \mbox{ if } y\geq x^3, \\
y +\frac{1}{40}\varepsilon, & \mbox{ if } y\leq x^3,
\end{array} \right. \\
\dot y = -x
\end{array}
\]
has three limit cycles.
\end{example}

As a second application of Theorem \ref{main2}, we consider piecewise linear differential systems \eqref{eq:1} with $N=2.$ We impose conditions on the coefficients of $P_1^\pm$ and $Q_1^\pm$ in order that the first order Melnikov function $\Delta_1$ vanishes identically. The next result shows that in this case the second order Melnikov function $\Delta_2$ has at most 7 isolated zeros. From statement {\bf(ii)} of Theorem \ref{main2}, this means that the differential system \eqref{eq:1} has at most 7 limit cycles bifurcating from the periodic orbits of the linear center. Moreover, we shall see that this maximum number is reached by particular examples. This proves Theorem \ref{main}.

 \begin{proposition}\label{prop2}
Consider the piecewise linear differential system \eqref{eq:1} with $N=2,$ and let $\Delta_1$ and $\Delta_2$ be its first and second order Melnikov functions \eqref{mel1}, respectively. In addition, assume that $\Delta_1=0.$ Then,  the number of isolated zeros of $\Delta_2$ does not exceed 7. Moreover, there exist linear differential systems \eqref{eq:1} such that $\Delta_2$ has exactly 7 simple zeros.
\end{proposition}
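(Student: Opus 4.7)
The plan is to follow the same strategy as the proof of Proposition \ref{prop1}, but working one order higher in $\varepsilon$ and under the hypothesis $\Delta_1\equiv 0$. First I would pass to polar coordinates $(x,y)=(r\cos\theta,r\sin\theta)$ and rewrite \eqref{eq:1} as the $2\pi$-periodic scalar equation
\[
\frac{dr}{d\theta}=\varepsilon F_1^\pm(\theta,r)+\varepsilon^2 F_2^\pm(\theta,r)+\mathcal{O}(\varepsilon^3),
\]
with switching angles $\theta_1(r)\in(0,\pi/2)$ and $\theta_2(r)=\theta_1(r)+\pi$ given implicitly by $\sin\theta=r^2\cos^3\theta$. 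In the notation of Theorem \ref{main2} this fits $M=2$, with $F_i^0=F_i^2=F_i^-$ and $F_i^1=F_i^+$. The hypothesis $\Delta_1\equiv 0$, together with the linear independence of $\cos\theta_1(r),\ r,\ \sin\theta_1(r)$, forces $\gamma_0=\gamma_1=\gamma_2=0$ in \eqref{eq:gamas}, imposing three explicit linear relations on the first-order coefficients.

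Next I would invoke Theorem \ref{main2} to write
\[
\Delta_2(r)=f_2(r)+f_2^*(r),
\]
where $f_2^*(r)=\sum_{j=1}^{2}\bigl(F_1^{j-1}(\theta_j(r),r)-F_1^{j}(\theta_j(r),r)\bigr)\theta_j'(r)\int_0^{\theta_j(r)}F_1(s,r)\,ds$ is the jump term specific to the nonsmooth setting. After carrying out the integrations and eliminating trigonometric functions via the identities $\cos\theta_1=x/r$, $\sin\theta_1=x^3/r$ and $r^2=x^2+x^6$ already used in the proof of Proposition \ref{prop1}, I expect $\Delta_2(r)$, multiplied by a suitable positive factor, to coincide with an element
\[
p_2(x)=\sum_{k=0}^{6}\lambda_k\,u_k(x)
\]
of the span of an explicit ordered tuple $\mathcal{F}=[u_0,\dots,u_6]$ of polynomials in $x$, the seven coefficients $\lambda_k$ being independent linear combinations of the remaining free parameters of $P_1^\pm,Q_1^\pm,P_2^\pm,Q_2^\pm$ after the three constraints above.

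The decisive analytic step is then to verify that $\mathcal{F}$ satisfies the hypotheses of Theorem \ref{ECT2} on $(0,+\infty)$: the Wronskians $W_0,\dots,W_5$ must be nowhere zero and $W_6$ must have exactly one simple zero. Granting this, Theorem \ref{ECT2} yields the upper bound $7$ on the number of isolated positive zeros of $p_2$ and, simultaneously, the realizability of every configuration of at most seven simple zeros. Statement \textbf{(ii)} of Theorem \ref{main2} converts each simple positive zero of $\Delta_2$ into a limit cycle of \eqref{eq:1} bifurcating from a periodic orbit of the linear center, proving both the bound and the sharpness. Finally I would translate a configuration of seven simple zeros back into explicit numerical values for the coefficients of $P_i^\pm,Q_i^\pm$, in the spirit of the example following Proposition \ref{prop1}.

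The main obstacle will be the algebraic bookkeeping needed to produce $p_2(x)$. The integrand of $f_2$ contains $D_xF_1\int F_1+F_2$, piecewise in $\theta$, while $f_2^*$ requires differentiating $\theta_1(r)$ implicitly from $\sin\theta_1=r^2\cos^3\theta_1$ and evaluating the generically nonzero jumps $F_1^--F_1^+$ at the two switching points. Tracking which monomials in $x$ survive, and confirming that they organize themselves into an ordered tuple of exactly seven independent functions, is where the bulk of the work lies. A secondary but essential task is the Wronskian verification for Theorem \ref{ECT2}; this will likely require a careful choice of ordering of the $u_k$'s and probably computer algebra to certify the sign pattern of $W_0,\dots,W_5$ and the existence of a unique simple root of $W_6$ on $(0,+\infty)$.
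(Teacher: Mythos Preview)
Your overall strategy---polar coordinates, imposing $\gamma_0=\gamma_1=\gamma_2=0$, computing $\Delta_2=f_2+f_2^*$ via Theorem \ref{main2}, clearing to a polynomial $p_2(x)$, and invoking Chebyshev theory---matches the paper's proof exactly. The one point where your expectation diverges from what actually happens is the structure of the Chebyshev system. You anticipate seven functions $u_0,\dots,u_6$ with the last Wronskian $W_6$ having a single simple zero, so that Theorem \ref{ECT2} (positive accuracy) delivers the bound $7$. In the paper's computation, however, $p_2$ turns out to be a combination of \emph{eight} functions $u_0,\dots,u_7$ (monomials $x^5,x^4,x^6,x^7,1,x^2$ together with $x^3-x^7$ and $x+3x^9$), and all eight Wronskians $W_0,\dots,W_7$ are nonvanishing on $(0,\infty)$. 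Thus $\mathcal{F}=[u_0,\dots,u_7]$ is a genuine ECT-system and the classical Theorem \ref{ECT} already yields both the upper bound $7$ and its realizability, with no need for the positive-accuracy refinement. This does not break your plan---once you carry out the algebra you would simply discover one more independent monomial than you guessed and switch to Theorem \ref{ECT}---but it is worth flagging so you do not force the computation into the wrong template.
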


\begin{proof}
In order to apply the Melnikov method of second order it is necessary that the Melnikov function of first order $\Delta_1$ be identically zero.
So, we impose that the $\gamma$'s defined in \eqref{eq:gamas} are identically zero. It is enough to take
\begin{equation} \label{eq:con1}
\begin{array}{l}
a_{11}=-(b_{21}+\alpha_{11}+\beta_{21}), \\
b_{01}=\beta_{01}, \\
a_{01}= \alpha_{01}.
\end{array}
\end{equation}
Again, we use polar coordinates $(x,y)=(r\cos\theta,r\sin\theta)$ and take $\theta$ as the new independent variable to transform system \eqref{eq:1} into the differential equation
\[
\frac{dr}{d\theta} = \varepsilon F_1^\pm(\theta,r) + \varepsilon^2 F_2^\pm(\theta,r) + \varepsilon^3 R_2^\pm(\theta,r,\varepsilon),
\]
where
\[
\begin{array}{l}
F_1^\pm(\theta,r)=  f_{11}^\pm(\theta) + r f_{12}^\pm(\theta), \\ [5pt]
F_2^\pm(\theta,r)=  \displaystyle\frac{1}{r} f_{21}^\pm(\theta) + f_{22}^\pm(\theta) + rf_{23}^\pm(\theta),\end{array}\]
with
\begin{align*}
f_{11}^\pm(\theta)&= c_{101}^\pm\cos\theta+s_{101}^\pm\sin\theta, \\
f_{12}^\pm(\theta)&= c_{110}^\pm +c_{112}^\pm\cos(2\theta)+s_{112}^\pm\sin(2\theta), \\
f_{21}^\pm(\theta)&=c_{202}^\pm\cos(2\theta)+s_{202}^\pm\sin(2\theta), \\
f_{22}^\pm(\theta)&=c_{211}^\pm\cos\theta +s_{211}^\pm\sin\theta +c_{213}^\pm\cos(3\theta)+s_{213}^\pm\sin(3\theta), \\
f_{23}^\pm(\theta)&=c_{220}^\pm +c_{222}^\pm\cos(2\theta) + c_{224}^\pm\cos(4\theta) + s_{222}^\pm\sin(2\theta) + s_{224}^\pm\sin(4\theta),
\end{align*}
and
\begin{align*}
c_{101}^+&= -\alpha_{01}, \hspace{1cm} s_{101}^+= -\beta_{01}, \hspace{1cm}
c_{110}^+= \frac{1}{2}(\alpha_{11} + \beta_{21}),   \\
c_{112}^+&= \frac{1}{2} (2 b_{21}+\alpha_{11}+\beta_{21}), \hspace{1cm}  s_{112}^+= \frac{1}{2} (-a_{21}-b_{11}), \\
c_{202}^+&= -\alpha_{01} \beta_{01},  s_{202}^+= \frac{1}{2} \left(\alpha_{01}^2-\beta_{01}^2\right), \\
c_{211}^+&= \frac{1}{2} (-2a_{02}+a_{21} \alpha_{01}-b_{11}\alpha_{01}+\alpha_{11}\beta_{01}+\beta_{01}\beta_{21}), \\
s_{211}^+&= \frac{1}{2} (a_{21}\beta_{01}-2 b_{02}-b_{11}\beta_{01}-\alpha_{01}\alpha_{11}-\alpha_{01}\beta_{21}), \\
c_{213}^+&= \frac{1}{2} (-a_{21}\alpha_{01}-b_{11}\alpha_{01}+2b_{21}\beta_{01}+\alpha_{11}\beta_{01}+\beta_{01}\beta_{21}), \\
s_{213}^+&= \frac{1}{2} (-a_{21}\beta_{01}-b_{11}\beta_{01}-2b_{21}\alpha_{01}-\alpha_{01}\alpha_{11}-\alpha_{01}\beta_{21}), \\
c_{220}^+&= \frac{1}{4} (-2a_{12}-a_{21}\alpha_{11}-a_{21}\beta_{21}+b_{11}\alpha_{11}+b_{11}\beta_{21}-2b_{22}),  \\
c_{222}^+&= \frac{1}{2} (-a_{12}-a_{21}b_{21}+b_{11}b_{21}+b_{11}\alpha_{11} +b_{11}\beta_{21}+b_{22}), \\
s_{222}^+&= \frac{1}{4} \left(a_{21}^2-2a_{22}-b_{11}^2-2b_{12}+2b_{21}\alpha_{11}+ 2b_{21}\beta_{21}+(\alpha_{11}+\beta_{21})^2\right), \\
c_{224}^+&= \frac{1}{4} (a_{21}+b_{11}) (2b_{21}+\alpha_{11}+\beta_{21}), \\
s_{224}^+&= \frac{1}{8} \left(-a_{21}^2-2a_{21}b_{11}-b_{11}^2+(2b_{21}+\alpha_{11} +\beta_{21})^2\right), \\
c_{101}^-&=-\alpha_{01}, \hspace{1cm} s_{101}^-=  -\beta_{01}, \hspace{1cm}
c_{110}^-= \frac{1}{2} (-\alpha_{11}-\beta_{21}),   \\
c_{112}^-&= \frac{1}{2}(\beta_{21}-\alpha_{11}),  \hspace{1cm}
s_{112}^-= \frac{1}{2} (2b_{21}+\alpha_{11}+\beta_{21}), \hspace{1cm} \\
c_{202}^-&=-\alpha_{01}\beta_{01}, \hspace{1cm}
s_{202}^-= \frac{1}{2} \left(\alpha_{01}^2-\beta_{01}^2\right), \\
c_{211}^-&= \frac{1}{2} (\alpha_{01}\alpha_{21}-\alpha_{01}\beta_{11}-2\alpha_{02}- \alpha_{11}\beta_{01}-\beta_{01}\beta_{21}), \\
s_{211}^-&= \frac{1}{2} (\alpha_{01} (\alpha_{11}+\beta_{21})+\alpha_{21}\beta_{01}- \beta_{01}\beta_{11}-2\beta_{02}), \\
c_{213}^-&= \frac{1}{2} (\beta_{01} (\beta_{21}-\alpha_{11})-\alpha_{01}(\alpha_{21}+ \beta_{11})), \\
s_{213}^-&= \frac{1}{2} (\alpha_{01} (\alpha_{11}-\beta_{21})-\beta_{01}
(\alpha_{21}+\beta_{11})), \\
c_{220}^-&= \frac{1}{4} (\alpha_{11}\alpha_{21}-\alpha_{11}\beta_{11}-2\alpha_{12}+ \alpha_{21}\beta_{21}-\beta_{11}\beta_{21}-2\beta_{22}),  \\
c_{222}^-&= \frac{1}{2} (-\alpha_{11}\beta_{11}-\alpha_{12}-\alpha_{21}\beta_{21}+ \beta_{22}), \\
s_{222}^-&= \frac{1}{4} \left(\alpha_{11}^2+\alpha_{21}^2-2\alpha_{22}-\beta_{11}^2- 2\beta_{12}-\beta_{21}^2\right), \\
c_{224}^-&= -\frac{1}{4} (\alpha_{11}-\beta_{21}) (\alpha_{21}+\beta_{11}), \\
s_{224}^-&=\frac{1}{8} \left(\alpha_{11}^2-2\alpha_{11}\beta_{21}-\alpha_{21}^2- 2\alpha_{21}\beta_{11}-\beta_{11}^2+\beta_{21}^2\right).
\end{align*}
According to Section \ref{sec:mel} the Melnikov function of second order $\Delta_2$ is given by
\[\label{melnikov2}
\Delta_2(r)= f_2(r) + f_2^*(r),
\]
where
\[\begin{array}{lcl}
f_2(r)&=&\displaystyle\frac{1}{2\pi}\int_0^{\theta_1(r)}\!\!\left[D_rF_1^-(\theta,r) \int_0^\theta F_1(t,r)dt+F_2^-(\theta,r)\right]d\theta  \\ [10pt]
&+&\displaystyle\frac{1}{2\pi}\int_{\theta_1(r)}^{\theta_2(r)}\!\!\left[D_rF_1^+(\theta,r)\int_0^\theta F_1(t,r)dt+F_2^+(\theta,r)\right]d\theta  \\ [10pt]
&+&\displaystyle\frac{1}{2\pi}\int_{\theta_2(r)}^{2\pi}\,\left[D_rF_1^-(\theta,r)\int_0^\theta F_1(t,r)dt+F_2^-(\theta,r)\right]d\theta.
\end{array}\]
and
\[
\begin{aligned}
f_2^*(r) & =\Big(F_1^-(\te_1(r),r)-F_1^+(\te_1(r),r)\Big)\te_1^\prime(r)\int_0^{\te_1(r)}F_1(s,r)ds \\
&+\Big(F_1^+(\te_2(r),r)-F_1^-(\te_2(r),r)\Big)\te_2^\prime(r)\int_0^{\te_2(r)}F_1(s,r)ds.
\end{aligned}
\]
We recall that in our context  $\theta_2(r)=\theta_1(r)+\pi.$ So, we have $\te_1^\prime(r)=\te_2^\prime(r).$
For simplicity in the following formulas we denote $\theta_1(r)$  by $\theta_1.$ After some computations we obtain
\[
f_2(r)=\delta_1r+\delta_2\cos \theta_1+\delta_3\sin \theta_1+\delta_4\cos^3\theta_1+\delta_5\sin^3\theta_1,
\]
and
\[
f_2^*(r)=\frac{r^2\cos^2\theta_1}{2+6r^2\cos\theta_1\sin\theta_1}\big[\mu_1 r+ \mu_2\cos\theta_1+\mu_3\sin\theta_1\big]h(\theta_1),
\]
where
\begin{align*}
h(\theta_1)&=\eta_1+\eta_2\cos^2\theta_1+\eta_3\cos\theta_1\sin\theta_1,\\
\delta_1&= -\frac{\pi}{4} \big( 2 a_{12} + 2 b_{22} + a_{21} \alpha_{11} - b_{11} \alpha_{11} + 2 \alpha_{12} - \alpha_{11} \alpha_{21} \\
&+ \alpha_{11} \beta_{11} + a_{21} \beta_{21} - b_{11} \beta_{21} - \alpha_{21} \beta_{21} + \beta_{11} \beta_{21} + 2 \beta_{22}\big),\\
\delta_2&= 2 \big(-b_{02} + 2 b_{21} \alpha_{01} + a_{21} \beta_{01} - \alpha_{21} \beta_{01} + \beta_{02} - 2 \alpha_{01} \beta_{21}\big),\\
\delta_3&= 2 \big(a_{02} + b_{11} \alpha_{01} - \alpha_{02} - 2 b_{21} \beta_{01} - 4 \alpha_{11} \beta_{01} - \alpha_{01} \beta_{11} - 2 \beta_{01} \beta_{21}\big),\\
\delta_4&= \alpha_{01} \big(-4 b_{21} - 4 \alpha_{11}\big) + \beta_{01} \big(-2 a_{21} - 2 b_{11} +
2 \alpha_{21} + 2 \beta_{11}\big),\\
\delta_5&= \big(4 b_{21} + 4 \alpha_{11}\big) \beta_{01} + \alpha_{01} \big(-2 a_{21} - 2 b_{11} + 2 \alpha_{21} + 2 \beta_{11}\big),
\end{align*} $\;$ \vspace{-0,7cm}
\begin{align*}
\mu_1&= \pi(\alpha_{11} + \beta_{21}), & \eta_1 & = 2 \big(\beta_{21}-b_{21}\big),\\
\mu_2&= -4\beta_{01},& \eta_2 & = 4 \big(\alpha_{11}+b_{21}\big), \\
\mu_3&= 4\alpha_{01}, & \eta_3 &=  2 \big(\alpha_{21} + \beta_{11} - a_{21}-b_{11}\big).
\end{align*}

Again, using the identities \eqref{eq:id} equation $\Delta_2(r)=0$ is transformed into the polynomial equation $x^3(1+x^4)p_2(x)=0,$ where
\begin{equation}\label{p2}
p_2(x) = \sum_{k=0}^{7} \lambda_k u_k(x),
\end{equation}
with
\begin{align*}
u_0(x)&= x^5, & u_1(x) & = x^4, & u_2(x)&= x^6,& u_3(x) & = x^7, \\
u_4(x)&= 1, & u_5(x) &=  x^2, & u_6(x)&= x^{3}-x^{7}, & u_7(x) &=  x+3x^9,
\end{align*}
and coefficients
\begin{align}
\lambda_0=& -8 \pi \big(a_{12} + b_{22} + \alpha_{12} + (a_{21} - \alpha_{21})(\alpha_{11} + \beta_{21}) + \beta_{22}\big), \nonumber \\
\lambda_1=& 8 \big(3 \beta_{02}-3 b_{02} + 4 b_{21} \alpha_{01} + 3 a_{21} \beta_{01} - 3 \alpha_{21} \beta_{01} - 4 \alpha_{01} \beta_{21}\big), \nonumber \\
\lambda_2=& 24 \big(a_{02} - a_{21} \alpha_{01} -\alpha_{02} + \alpha_{01} \alpha_{21} - 2 \beta_{01} (\alpha_{11} + \beta_{21})\big),\nonumber \\
\lambda_3=& 8 \pi (\alpha_{11} + \beta_{21})^2,\nonumber \\
\lambda_4=& -8 \big(b_{02} + b_{11} \beta_{01} - \beta_{02} - \beta_{01} \beta_{11} + 2 \alpha_{01} (\alpha_{11} + \beta_{21})\big), \label{lambdas} \\
\lambda_5=& 8 \big(a_{02} + b_{11} \alpha_{01} - \alpha_{02} - \alpha_{01} \beta_{11} - 4 \beta_{01} (b_{21} + 2 \alpha_{11} + \beta_{21})\big),\nonumber \\
\lambda_6=& 4 \pi \big(\alpha_{11} + \beta_{21}\big) \big(b_{21} + 2 \alpha_{11} + \beta_{21}\big),\nonumber \\
\lambda_7=& -\pi \big(2 a_{12} + 2 b_{22} + (a_{21} - b_{11} - \alpha_{21} + \beta_{11}) (\alpha_{11} + \beta_{21}) \nonumber \\
&+ 2 (\alpha_{12} + \beta_{22})\big). \nonumber
\end{align}

Considering the ordered set of functions $\mathcal{F}=[u_0,u_1,\ldots,u_7]$ and computing the Wronskians defined in \eqref{Wron} we obtain
\begin{align*}
W_0(x)&= x^5, & W_1(x) & = -x^8, \\
W_2(x)&= -2x^{12}, & W_3(x) & = -12x^{16}, \\
W_4(x)&= -10080x^{12}, & W_5(x) &= -2419200x^9, \\
W_6(x)&= -174182400x^6, & W_7(x) &=-125411328000(1+189x^8).
\end{align*}

From Theorem \ref{ECT}, the maximum number of positive roots of polynomial $p_2$ is seven and the upper bound is reached. From \eqref{lambdas}, it is clear that for any choice of $\lambda$'s it is possible to obtain the corresponding perturbation. This concludes the proof of Proposition \ref{prop2}.
\end{proof}

\begin{example}
In what follows, we exhibit a concrete perturbation presenting seven limit cycles for $|\ep|\neq0$ sufficiently small. Choosing
\begin{align}
\lambda_0&= 1, &
\lambda_1&= -\frac{434699860}{124987243}, \nonumber \\
\lambda_2&= -\frac{18912094}{124987243}, &
\lambda_3&= \frac{5527397195}{874910701}, \label{choice} \\
\lambda_4&= \frac{1929240}{1582117}, &
\lambda_5&= -\frac{613409686}{124987243}, \nonumber \\
\lambda_6&= \frac{11037105503}{1749821402}, &
\lambda_7&= -\frac{6534}{874910701}, \nonumber
\end{align}
the polynomial \eqref{p2} becomes
\[
p_2(x)=-\frac{\left(39204 x^2+1097712x+423361\right)}{1749821402}\prod_{k=1}^7 (x-k).
\]
\begin{figure}[h]
\begin{center}
\begin{overpic}[height=5cm]{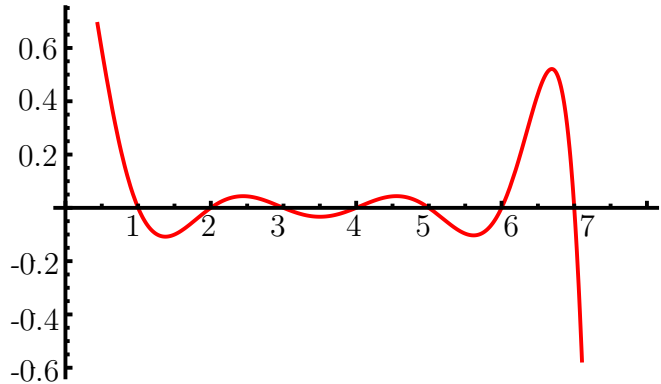}
\put(-2,51){$0.6$} \put(15,23){$1$} \put(27,23){$2$}
\put(-2,43){$0.4$} \put(38,23){$3$} \put(50,23){$4$}
\put(-2,34){$0.2$} \put(61,23){$5$} \put(75,23){$6$}
\put(-3.5,0){-$0.6$} \put(87,23){$7$}
\put(-3.5,8){-$0.4$}
\put(-3.5,17){-$0.2$}\end{overpic}
\caption{Graph of $p_2(x).$}\label{fig3}
\end{center}
\end{figure}
So, $p_2(x)$ has seven positive roots at $1,2,\dots,7$ (see Figure \ref{fig3}).

The system composed by equations \eqref{lambdas} with the parameters given by \eqref{choice} has infinitely many solutions. One of these solutions is given by
$a_{01}=\overline{a_{01}},$ $a_{11}=\overline{a_{11}},$ $a_{21}=\overline{a_{21}},$
$b_{01}=\overline{b_{01}},$ $b_{21}=\overline{b_{21}},$
$\al_{01}=\overline{\al_{01}},$ $\al_{11}=\overline{\al_{11}},$
$\beta_{01}=\overline{\beta_{01}},$
$a_{02}=\overline{a_{02}},$ $a_{12}=\overline{a_{12}},$
$b_{02}=\overline{b_{02}},$ where
\begin{align*}
\overline{a_{01}}&=\frac{822816127288564767 \sqrt{\frac{5527397195 \pi }{1749821402}}}{9860485382218537051},\\
\overline{a_{11}}&= \frac{1}{2} \sqrt{\frac{5527397195}{1749821402 \pi }}-\frac{17688887}{2 \sqrt{9671957909165767390 \pi }},\\
\overline{a_{21}}&= \frac{874936837}{\sqrt{9671957909165767390 \pi }},\\
\overline{b_{01}}&= -\frac{1014941851585915217 \sqrt{\frac{11054794390 \pi }{874910701}}}{29581456146655611153},\\
\overline{b_{21}}&= \frac{17688887}{2 \sqrt{9671957909165767390 \pi }},\\
\overline{\al_{01}}&= \frac{822816127288564767 \sqrt{\frac{5527397195 \pi }{1749821402}}}{9860485382218537051},\\
\overline{\al_{11}}&= -\frac{1}{2} \sqrt{\frac{5527397195}{1749821402 \pi }},\\
\overline{\beta_{01}}&= -\frac{1014941851585915217 \sqrt{\frac{11054794390 \pi }{874910701}}}{29581456146655611153},\\
\overline{a_{02}}&= \frac{26106731320711553594487715355}{103524530135484878237818593012},\\
\overline{a_{12}}&= \frac{874962973}{6999285608 \pi },\\
\overline{b_{02}}&= \frac{1918068234277679948089129635}{17254088355914146372969765502},
\end{align*}
and all other coefficients equal to zero. Notice that conditions \eqref{eq:con1} have been taken into account. So, for $|\ep|\neq0$ sufficiently small, system
\[\begin{array}{l}
\dot x = \left\{\begin{array}{ll}
y +(\overline{a_{01}}+\overline{a_{11}}x+\overline{a_{21}}y)\varepsilon + (\overline{a_{02}}+\overline{a_{12}}x)\varepsilon^2 & \mbox{if } y\geq x^3, \\
y +(\overline{b_{01}}+\overline{b_{21}}y)\varepsilon+\overline{b_{02}}\varepsilon^2  & \mbox{if } y\leq x^3,
\end{array} \right. \vspace{0.2cm}\\ 
\dot y = \left\{\begin{array}{ll}
-x +(\overline{\al_{01}}+\overline{\al_{11}}x)\varepsilon   & \mbox{if } y\geq x^3, \\
-x + \overline{\beta_{01}}\varepsilon & \mbox{if } y\leq x^3
\end{array} \right. \\
\end{array}
\]
has seven limit cycles.
\end{example}

\section*{Acknowledgements}

We thank to the referee for the helpful comments and suggestions.

JLRB has been partially supported by the Brazilian FAPESP grant
2013/24541-0. CAB has been partially supported by the Brazilian FAPESP grant
2013/24541-0 and the Brazilian Capes grant 88881.068462/2014-01. JL has been partially supported by a FEDER-MINECO grant MTM2016-77278-P, a MINECO grant MTM2013-40998-P, and an AGAUR grant number 2014SGR-568. DDN has been partially supported by the Brazilian FAPESP grant 2016/11471-2.

\bibliographystyle{siam}
\bibliography{biblio.bib}

\begin{thebibliography}{10}

\bibitem{ArtLliMedTei2013}
{\sc J.~Artes, J.~Llibre, J.~C. Medrado, and M.~A. Teixeira}, {\em Piecewise
  linear differential systems with two real saddles.}, Math. Comput.
  Simulation, to appear (2017).

\bibitem{BraMel2014}
{\sc D.~D.~C. Braga and L.~F. Mello}, {\em More than three limit cycles in
  discontinuous piecewise linear differential systems with two zones in the
  plane}, Internat. J. Bifur. Chaos Appl. Sci. Engrg., 24 (2014), pp.~1450056,
  10.

\bibitem{BuiLli2004}
{\sc A.~Buica and J.~Llibre}, {\em Averaging methods for finding periodic
  orbits via {B}rouwer degree}, Bull. Sci. Math., 128 (2004), pp.~7--22.

\bibitem{BuzPesTor2013}
{\sc C.~Buzzi, C.~Pessoa, and J.~Torregrosa}, {\em Piecewise linear
  perturbations of a linear center}, Discrete Contin. Dyn. Syst., 33 (2013),
  pp.~3915--3936.

\bibitem{ColJefLazOlm2017}
{\sc A.~Colombo, M.~Jeffrey, J.~T. L\'azaro, and J.~M. Olm}, {\em Extended
  Abstracts Spring 2016 - Nonsmooth Dynamics}, vol.~8 of Trends in Mathematics
  Research Perspectives CRM Barcelona, Birkh\"auser, Springer International
  Publishing, 2017.

\bibitem{BraMel2013}
{\sc D.~de~Carvalho~Braga and L.~F. Mello}, {\em Limit cycles in a family of
  discontinuous piecewise linear differential systems with two zones in the
  plane}, Nonlinear Dynam., 73 (2013), pp.~1283--1288.

\bibitem{BerBudChaKow2008}
{\sc M.~di~Bernardo, C.~J. Budd, A.~R. Champneys, and P.~Kowalczyk}, {\em
  Piecewise-smooth dynamical systems}, vol.~163 of Applied Mathematical
  Sciences, Springer-Verlag London, Ltd., London, 2008.

\bibitem{FrePonTor2012}
{\sc E.~Freire, E.~Ponce, and F.~Torres}, {\em Canonical discontinuous planar
  piecewise linear systems}, SIAM J. Appl. Dyn. Syst., 11 (2012), pp.~181--211.

\bibitem{FrePonTor2014}
\leavevmode\vrule height 2pt depth -1.6pt width 23pt, {\em The discontinuous
  matching of two planar linear foci can have three nested crossing limit
  cycles}, Publ. Mat., 58 (2014), pp.~221--253.

\bibitem{GiaPli2001}
{\sc F.~Giannakopoulos and K.~Pliete}, {\em Planar systems of piecewise linear
  differential equations with a line of discontinuity}, Nonlinearity, 14
  (2001), pp.~1611--1632.

\bibitem{HanRomZha2016}
{\sc M.~Han, V.~G. Romanovski, and X.~Zhang}, {\em Equivalence of the
  {M}elnikov function method and the averaging method}, Qual. Theory Dyn.
  Syst., 15 (2016), pp.~471--479.

\bibitem{HanZha2010}
{\sc M.~Han and W.~Zhang}, {\em On {H}opf bifurcation in non-smooth planar
  systems}, J. Differential Equations, 248 (2010), pp.~2399--2416.

\bibitem{HuaYan2013}
{\sc S.-M. Huan and X.-S. Yang}, {\em Existence of limit cycles in general
  planar piecewise linear systems of saddle-saddle dynamics}, Nonlinear Anal.,
  92 (2013), pp.~82--95.

\bibitem{HuaYan2013b}
\leavevmode\vrule height 2pt depth -1.6pt width 23pt, {\em On the number of
  limit cycles in general planar piecewise linear systems of node-node types},
  J. Math. Anal. Appl., 411 (2014), pp.~340--353.

\bibitem{ILN2017}
{\sc J.~Itikawa, J.~Llibre, and D.~D. Novaes}, {\em A new result on averaging
  theory for a class of discontinuous planar differential systems with
  applications}, To appear in R. Matem\'{a}tica Iberoamericana,  (2017).

\bibitem{KarStu1966}
{\sc S.~Karlin and W.~J. Studden}, {\em Tchebycheff systems: {W}ith
  applications in analysis and statistics}, Pure and Applied Mathematics, Vol.
  XV, Interscience Publishers John Wiley \& Sons, New York-London-Sydney, 1966.

\bibitem{Li2014}
{\sc L.~Li}, {\em Three crossing limit cycles in planar piecewise linear
  systems with saddle-focus type}, Electron. J. Qual. Theory Differ. Equ.,
  (2014), pp.~No. 70, 14.

\bibitem{LliMerNov2015}
{\sc J.~Llibre, A.~C. Mereu, and D.~D. Novaes}, {\em Averaging theory for
  discontinuous piecewise differential systems}, J. Differential Equations, 258
  (2015), pp.~4007--4032.

\bibitem{LliNovRod2017}
{\sc J.~Llibre, D.~D. Novaes, and C.~A.~B. Rodrigues}, {\em Averaging theory at
  any order for computing limit cycles of discontinuous piecewise differential
  systems with many zones}, Phys. D, 353/354 (2017), pp.~1--10.

\bibitem{LNT2014}
{\sc J.~Llibre, D.~D. Novaes, and M.~A. Teixeira}, {\em Higher order averaging
  theory for finding periodic solutions via {B}rouwer degree}, Nonlinearity, 27
  (2014), pp.~563--583.

\bibitem{LliNovTei2015b}
\leavevmode\vrule height 2pt depth -1.6pt width 23pt, {\em Limit cycles
  bifurcating from the periodic orbits of a discontinuous piecewise linear
  differentiable center with two zones}, Internat. J. Bifur. Chaos Appl. Sci.
  Engrg., 25 (2015), pp.~1550144, 11.

\bibitem{LliNovTei2015}
\leavevmode\vrule height 2pt depth -1.6pt width 23pt, {\em Maximum number of
  limit cycles for certain piecewise linear dynamical systems}, Nonlinear
  Dynam., 82 (2015), pp.~1159--1175.

\bibitem{LNT15}
\leavevmode\vrule height 2pt depth -1.6pt width 23pt, {\em On the birth of
  limit cycles for non-smooth dynamical systems}, Bull. Sci. Math., 139 (2015),
  pp.~229--244.

\bibitem{LliPon2012}
{\sc J.~Llibre and E.~Ponce}, {\em Three nested limit cycles in discontinuous
  piecewise linear differential systems with two zones}, Dyn. Contin. Discrete
  Impuls. Syst. Ser. B Appl. Algorithms, 19 (2012), pp.~325--335.

\bibitem{LliTeiTor2013}
{\sc J.~Llibre, M.~A. Teixeira, and J.~Torregrosa}, {\em Lower bounds for the
  maximum number of limit cycles of discontinuous piecewise linear differential
  systems with a straight line of separation}, Internat. J. Bifur. Chaos Appl.
  Sci. Engrg., 23 (2013), pp.~1350066, 10.

\bibitem{LliZha2016}
{\sc J.~Llibre and X.~Zhang}, {\em Limit cycles for discontinuous planar
  piecewise linear differential systems}.
\newblock Preprint, 2016.

\bibitem{MakLam2012}
{\sc O.~Makarenkov and J.~S.~W. Lamb}, {\em Dynamics and bifurcations of
  nonsmooth systems: a survey}, Phys. D, 241 (2012), pp.~1826--1844.

\bibitem{NovPon2015}
{\sc D.~D. Novaes and E.~Ponce}, {\em A simple solution to the {B}raga-{M}ello
  conjecture}, Internat. J. Bifur. Chaos Appl. Sci. Engrg., 25 (2015),
  pp.~1550009, 7.

\bibitem{NovTor2017}
{\sc D.~D. Novaes and J.~Torregrosa}, {\em On extended {C}hebyshev systems with
  positive accuracy}, J. Math. Anal. Appl., 448 (2017), pp.~171--186.

\bibitem{Sim2010}
{\sc D.~J.~W. Simpson}, {\em Bifurcations in piecewise-smooth continuous
  systems}, vol.~70 of World Scientific Series on Nonlinear Science. Series A:
  Monographs and Treatises, World Scientific Publishing Co. Pte. Ltd.,
  Hackensack, NJ, 2010.

\bibitem{Tei2012}
{\sc M.~A. Teixeira}, {\em Perturbation theory for non-smooth systems}, in
  Mathematics of complexity and dynamical systems. {V}ols. 1--3, Springer, New
  York, 2012, pp.~1325--1336.

\end{thebibliography}

\end{document}